\newcommand{\bz}{\mathbf Z}
\newcommand{\bc}{\mathbf C}
\newcommand{\ind}{\operatorname{ind}}
\newcommand{\spin}{\text{spin}}
\newcommand{\sgn}{\operatorname{sgn}}
\newcommand{\Pin}{\operatorname{Pin}}
\newtheorem{theorem}{Theorem}
\newtheorem{proposition}{Proposition}
\newtheorem{corollary}{Corollary}
\theoremstyle{definition}
\newtheorem{definition}{Definition}
\newtheorem{remark}{Remark}
\begin{document}
\title[ ]
{On Manolescu's $\kappa$-invariants of rational homology $3$-spheres 
}

\author[]{Masaaki Ue}

\address{
Kyoto University, Kyoto, 606-8502, Japan}
\email{ue.masaaki.87m@st.kyoto-u.ac.jp}

\begin{abstract}
We give an estimate for Manolescu's $\kappa$-invariant of a rational homology $3$-sphere $Y$ by the data of a spin $4$-orbifold bounded by $Y$.
By an appropriate choice of a $4$-orbifold, sometimes we can restrict and determine the value of $\kappa$.  
we give such examples in case of rational homology $3$-spheres obtained by Dehn surgeries along knots in $S^3$. 
\end{abstract}

\maketitle

Let $Y$ be a rational homology 3-sphere with spin structure $\mathfrak s$. Then there exists a 
compact smooth 4-manifold  with $\spin$ structure $(X,\mathfrak s_X )$ that satisfies $\partial (X , \mathfrak s_X ) = (Y, \mathfrak s )$. 
Manolescu's $\kappa$-invariant $\kappa (Y, \mathfrak s )$
(which is simply denoted by $\kappa (Y)$ if $Y$ is an integral homology 3-sphere), defined via a Seiberg-Witten Floer homotopy type of $(Y, \mathfrak s )$ \cite{Mano1}, \cite{Mano3}, 
gives constraints on the intersection form of $X$ \cite{Mano4}. In case of almost rational homology 3-spheres, Dai-Sasahira-Stoffregen \cite{DSS} determines their 
Seiberg-Witten Floer homotopy types and the values of $\kappa$-invariants. 
But it is still difficult to 
determine $\kappa (Y, \mathfrak s )$ in general. In this paper we 
give an estimate of $\kappa (Y, \mathfrak s )$ by the data of a compact $\spin$ $4$-orbifold $X$ bounded by $Y$. If $b_2 (X)$ is small enough, 
the value of $\kappa (Y, \mathfrak s )$ is restricted and sometimes determined. In \S 2 we apply it to Dehn surgeries along 
knots in $S^3$. 
For example, let $K_{1/n}$ be a $(1/n)$-surgery along a knot $K$. Then 
$\kappa (K_{1/n}) =0$ for any knot $K$ if $n$ is even and positive (Theorem \ref{Dehn}), while if $n$ is odd and positive, every 
integer is realized as the value of $\kappa (K_{1/n} )$ for a certain knot $K$ (an explicit example is given in Proposition \ref{twist}). 

\section{An estimate of $\kappa (Y, \mathfrak s )$ by a $\spin$ 4-orbifold bounded by $Y$} 
Let $(Y, \mathfrak s )$ be a rational homology 3-sphere with $\spin$ structure. Then $Y$ bounds a compact connected 4-orbifold $X$ with $\spin$ structure 
$\mathfrak s_X$, which induces $\mathfrak s$ on $Y$. 

\begin{theorem}\label{estimate}
Let $(Y, \mathfrak s )$ be a rational homology 3-sphere with $\spin$ structure, and $( X, \mathfrak s_X )$ be a compact connected $\spin$ 4-orbifold  bounded 
by $(Y, \mathfrak s )$. We assume that  the set of the singular points of $X$ consists of isolated points only, which is denoted by 
$\{ x_i  \ | \ i=1 ,\dots , n \}$. Then a regular neighborhood of $x_i$ is a cone $cS_i$ over some spherical 3-manifold $S_i$. 
Let $\mathfrak s_i$ be the $\spin$ structure on $S_i$ induced by $\mathfrak s_X$, and $k^{\pm} = b_2^{\pm} (X)$. Then $\kappa (Y, \mathfrak s )$ has the following properties. 

\begin{enumerate}
\item 
\[
\kappa (Y,  \mathfrak s ) \equiv -\frac{k^+ -k^- }{8} -\sum_{i=1}^n \overline{\mu } (S_i , \mathfrak s_i ) \pmod{2\bz}.
\]
\item 
\begin{align*} 
-k^+ -\frac{k^+ -k^-}{8} -\epsilon^+  &-\sum_{i=1}^n \overline{\mu} (S_i , \mathfrak s_i ) \le \kappa (Y, \mathfrak s ) \\
&\le 
k^- -\frac{k^+ -k^- }{8} +\epsilon^- -\sum_{i=1}^n  \overline{\mu} (S_i , \mathfrak s_i ),  
\end{align*}
where
\[
\epsilon^- = \begin{cases} 0 & (k^- \equiv 0 \pmod 2 ) \\
 1 & (k^- \equiv 1 \pmod 2 ) 
 \end{cases} , \ 
 \epsilon^+ = 
 \begin{cases} 
 0 & (k^+ =0 ) \\ -1 & (k^+ \equiv 1 \pmod 2 ) \\
 -2 & (k^+ \equiv 0 \pmod 2 , k>0 ) 
 \end{cases} 
 \]
 and $\overline{\mu} (S_i , \mathfrak s_i )$ is the Neumann-Siebenmann invariant of $(S_i , \mathfrak s _i )$ \cite{N}, \cite{Si}. 
 \end{enumerate}
 
 \end{theorem}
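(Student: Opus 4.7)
The plan is to reduce the orbifold bound to Manolescu's existing cobordism inequality for smooth $\spin$ $4$-manifolds by deleting the cone neighborhoods of the singular points. Let $N = X \setminus \bigcup_{i=1}^{n} \Int(cS_i)$; then $N$ is a smooth $\spin$ $4$-manifold with $\partial N = Y \sqcup (-\bigsqcup_i S_i)$, carrying $\mathfrak{s}$ on $Y$ and $\mathfrak{s}_i$ on each $S_i$. Since each cone $cS_i$ is contractible and each link $S_i$ is a rational homology $3$-sphere, a Mayer--Vietoris computation shows $H^2(N; \bq) \cong H^2(X; \bq)$ as inner product spaces, so $b_2^\pm(N) = k^\pm$ and $\sigma(N) = k^+ - k^-$.

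I would then attach $1$-handles in $N$ to replace the incoming boundary $\bigsqcup_i S_i$ by the connected sum $S_1 \# \cdots \# S_n$ (an operation that changes neither $b_2^\pm$ nor the signature) and view $N$ as a smooth $\spin$ cobordism from $S_1 \# \cdots \# S_n$ to $Y$. Applying Manolescu's cobordism inequality for the $\kappa$-invariant \cite{Mano4}, together with additivity of $\kappa$ under connected sum, yields both the mod-$2$ congruence
\[
\kappa(Y, \mathfrak{s}) - \sum_{i=1}^n \kappa(S_i, \mathfrak{s}_i) \equiv -\frac{k^+ - k^-}{8} \pmod{2\bz}
\]
and the bounds
\[
-k^+ - \frac{k^+ - k^-}{8} - \epsilon^+ \le \kappa(Y, \mathfrak{s}) - \sum_{i=1}^n \kappa(S_i, \mathfrak{s}_i) \le k^- - \frac{k^+ - k^-}{8} + \epsilon^-,
\]
in which the piecewise $\epsilon^\pm$ corrections match those in the theorem, reflecting the $\Pin(2)$-equivariant degree shifts of the Seiberg--Witten Floer homotopy type.

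The final ingredient is the identification $\kappa(S_i, \mathfrak{s}_i) = -\overline{\mu}(S_i, \mathfrak{s}_i)$ for each spherical $3$-manifold. Each $S_i$ is Seifert fibered and hence almost rational, so this identity follows from the Dai--Sasahira--Stoffregen computation \cite{DSS}. Substituting it into the congruence and inequalities above turns them into the two statements of the theorem.

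The hardest step will be pinning down the precise form of Manolescu's cobordism inequality in this setting: in particular, confirming that the $\epsilon^\pm$ corrections attached to $b_2^\pm(N) = k^\pm$ agree exactly with the piecewise formulas stated in the theorem, and that the orientation conventions on each link $S_i$ (as inherited from $X$) are consistent with the sign in the identification $\kappa(S_i, \mathfrak{s}_i) = -\overline{\mu}(S_i, \mathfrak{s}_i)$. One also implicitly needs a vanishing condition on $b_1(N)$ to apply the cobordism inequality cleanly, which should be built into the hypothesis that $X$ fills the rational homology sphere $Y$.
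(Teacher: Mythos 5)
Your skeleton---delete the cone neighborhoods, tube the links $S_i$ together into a single incoming end $S_0 = S_1 \# \cdots \# S_n$, and apply Manolescu's spin cobordism inequality to the remaining cobordism from $S_0$ to $Y$---is exactly the paper's, but two of your steps rest on facts that are false or unjustified as stated. First, $\kappa$ is \emph{not} additive under connected sum; Manolescu only proves inequalities relating $\kappa(Y_0 \# Y_1)$ to $\kappa(Y_0)+\kappa(Y_1)$, so you cannot replace $\kappa(S_0)$ by $\sum_i \kappa(S_i)$ by fiat. The paper circumvents this using the positive scalar curvature metrics on the $S_i$ (hence on $S_0$): by Proposition \ref{psc}, $\kappa(S_i,\mathfrak s_i) = -n(S_i,\mathfrak s_i,g_i)$, and it is the metric-dependent correction term $n$---an index-theoretic quantity expressed through eta invariants and identified with the Fukumoto--Furuta invariant, hence with $\overline{\mu}$---that is additive across the connected-sum cobordism $W$. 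Your alternative identification $\kappa(S_i)=-\overline{\mu}(S_i)$ via \cite{DSS} is fine for each individual spherical manifold (the paper remarks on this), but it does not by itself yield $\kappa(S_0) = -\sum_i \overline{\mu}(S_i,\mathfrak s_i)$.

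Second, and more seriously, the values $\epsilon^+ \in \{0,-1,-2\}$ do not come from the plain form of Theorem \ref{ineq}: applied naively to the cobordism from $S_0$ to $Y$, that theorem only gives $\epsilon^+ \in \{1,0\}$, symmetric with $\epsilon^-$ and weaker by $2$. The stated $\epsilon^+$ requires the improved inequality, which is available only when the incoming end $(S_0,\mathfrak s_0)$ is Floer $K_G$ split---again supplied by the positive scalar curvature metric via Proposition \ref{psc}. Since your proposal never invokes PSC, both the connected-sum step and the sharpened upper bound are unsupported. Two smaller points: the congruence in part (1) does not follow from the cobordism inequality; the paper derives it from $\kappa(Y,\mathfrak s) \equiv -\mu(Y,\mathfrak s) \pmod{2\bz}$ together with Novikov additivity of the signature after capping each $S_i$ with a spin $4$-manifold. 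And the vanishing of $b_1$ of the cobordism is not automatic from $Y$ being a rational homology sphere; it is an implicit hypothesis, not something ``built in.''
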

 
 The proof of the following corollary is straightforward from Theorem \ref{estimate}.
 
 \begin{corollary}\label{cor}
 Let $\kappa_0 = -\dfrac{k^+ - k^- }{8} -\sum\limits_{i=1}^n \overline{\mu} (S_i , \mathfrak s_i )$. Then 
 \[
 \kappa (Y, \mathfrak s ) = \begin{cases} 
 \kappa_0 &\text{if $k^+ \le 2$ and $k^- =0$}, \\
 \text{$\kappa_0$ or $\kappa_0 -2$} &\text{if $k^+ =3,4$ and $k^- =0$}, \\
 \text{$\kappa_0$ or $\kappa_0 +2$} &\text{if $k^+ \le 2$ and $k^- =1,2$}.
 \end{cases} 
 \]
 \end{corollary}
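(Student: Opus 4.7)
The proof of Corollary \ref{cor} should be a direct case-check once the bounds in Theorem \ref{estimate} are reorganized. The plan is to rewrite the two-sided inequality in (2) as
\[
\kappa_0 - k^+ - \epsilon^+ \le \kappa(Y, \mathfrak s) \le \kappa_0 + k^- + \epsilon^-,
\]
so the interval of admissible values is centered around $\kappa_0$, and then intersect this interval with the residue class $\kappa_0 + 2\bz$ guaranteed by (1). In each case listed in the corollary, the width of this interval is small enough that only one or two residues of $\kappa_0$ modulo $2$ survive.

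First I would handle the regime $k^+ \le 2$, $k^- = 0$. Here $\epsilon^- = 0$, so the upper bound is exactly $\kappa_0$. A short check of the three subcases $k^+ = 0, 1, 2$ (using $\epsilon^+ = 0, -1, -2$ respectively) shows that the lower bound is also $\kappa_0$. Hence $\kappa(Y, \mathfrak s)$ is forced to equal $\kappa_0$.

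Next, for $k^+ \in \{3, 4\}$ and $k^- = 0$, the upper bound remains $\kappa_0$, while the lower bound becomes $\kappa_0 - 2$ in both subcases (using $\epsilon^+ = -1$ when $k^+ = 3$ and $\epsilon^+ = -2$ when $k^+ = 4$). Since $\kappa(Y, \mathfrak s) \equiv \kappa_0 \pmod{2\bz}$, the only admissible values are $\kappa_0$ and $\kappa_0 - 2$. The symmetric case $k^+ \le 2$, $k^- \in \{1, 2\}$ is handled in the same way: the lower bound is $\kappa_0$ and a short check (using $\epsilon^- = 1$ for $k^- = 1$ and $\epsilon^- = 0$ for $k^- = 2$) shows that the upper bound is $\kappa_0 + 2$, giving the two possibilities $\kappa_0$ and $\kappa_0 + 2$.

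There is no real obstacle here; the entire argument is a finite case check driven by the definitions of $\epsilon^{\pm}$. The only thing to be careful about is not to forget the mod $2\bz$ congruence, which is what collapses the integer interval to at most two admissible values in the last two regimes.
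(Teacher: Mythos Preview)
Your argument is correct and matches the paper's approach: the paper states only that the corollary is ``straightforward from Theorem \ref{estimate}'', and your case-by-case verification of the bounds $\kappa_0 - k^+ - \epsilon^+ \le \kappa(Y,\mathfrak s) \le \kappa_0 + k^- + \epsilon^-$ combined with the congruence $\kappa(Y,\mathfrak s)\equiv \kappa_0 \pmod{2\bz}$ is exactly the intended verification.
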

 
 The proof of Theorem \ref{estimate} is based on the following inequalities by Manolescu. (In \cite{U4} we give the outline of the proof in case of a rational homology 3-sphere.) 
 
 \begin{theorem}\label{ineq}\cite{Mano4} 

Let $(Y_i , \mathfrak s_i )$ $(i=0,1 )$ be rational homology 3-spheres with $\spin$ structures and 
$(W , \mathfrak s_W )$ be a $\spin$ cobordism with $b_1 (W) =0$ from $(Y_0 , \mathfrak s_0 )$ to $(Y_1 , \mathfrak s_1 )$. 
Then 
\[
-b_2^- (W) +\frac{\sigma (W)}{8} -1 \le \kappa (Y_0, \mathfrak s_0 ) -\kappa (Y_1 , \mathfrak s_1 ) \le b_2^+ (W) +\frac{\sigma (W)}{8} +1 . 
\]
The first term of the above inequality can be replaced by $-b_2^- (W) +\frac{\sigma (W)}{8}$ if $b_2^- (W)$ is even, and 
the last term can be replaced by $b_2^+ (W) +\frac{\sigma (W)}{8}$ if $b_2^+ (W)$ is even. 

Furthermore if $(Y_0 , \mathfrak s_0 )$ is Floer $K_G$ split
{\footnote{See \cite{Mano4} for the definition of this terminology.}}, then the second inequality is improved as follows. 
\begin{align*}
&\kappa (Y_0, \mathfrak s_0 ) -\kappa  (Y_1 , \mathfrak s_1 )   \\
&\le \begin{cases} 
b_2^+ (W) +\frac{\sigma (W)}{8} -1 &\quad \text{if $b_2^+ (W)$ is odd}, \\
b_2^+ (W) +\frac{\sigma (W)}{8} -2 & \quad \text{if $b_2^+ (W)$ is even and $b_2^+ (W) >0$. } 
\end{cases}
\end{align*}
\end{theorem}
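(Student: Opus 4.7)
The plan is to work directly with the $\Pin(2)$-equivariant Seiberg--Witten Floer stable homotopy type $SWF(Y, \mathfrak s )$ that underlies the definition of $\kappa$. For a rational homology $3$-sphere with spin structure, $\kappa (Y, \mathfrak s )$ is extracted from a suitable $\Pin(2)$-equivariant (co)homological degree of $SWF(Y, \mathfrak s )$, normalized so that it behaves additively under smash product with $S^{\bh}$ (a quaternionic suspension shifts $\kappa$ by an integer) and detects the lowest dimension in which a canonical Borel-equivariant class survives. The core input is the \emph{relative Bauer--Furuta invariant}: a finite-dimensional approximation of the Seiberg--Witten map on a spin cobordism $(W, \mathfrak s_W )$ with $b_1 (W) =0$ yields a canonical $\Pin(2)$-equivariant stable map
\[
\Psi_W : SWF(Y_0, \mathfrak s_0 ) \wedge (\bh^{a})^+ \wedge (\widetilde{\br}^{\,b})^+ \longrightarrow SWF(Y_1, \mathfrak s_1 ) \wedge (\bh^{c})^+ \wedge (\widetilde{\br}^{\,d})^+,
\]
where $\widetilde{\br}$ denotes the sign representation of $\Pin(2)/S^1 = \bz /2$, and the integers $a, b, c, d$ are determined by $b_2^\pm (W)$ and $\sigma (W) /8$ through the Atiyah--Singer indices of the spin Dirac operator and of $d^+ \oplus d^*$ on $W$.

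First I would unwind the definition of $\kappa$ to reduce the desired bounds to a statement about the minimal $\Pin(2)$-equivariant stable degree at which a map to or from $SWF$ can exist. Comparing Borel-equivariant cohomological dimensions on source and target of $\Psi_W$ produces
\[
\kappa (Y_0 , \mathfrak s_0 ) - \kappa (Y_1 , \mathfrak s_1 ) \le b_2^+ (W) + \tfrac{\sigma (W)}{8} +1 ,
\]
where the $+1$ records the worst-case contribution of the $(\widetilde{\br}^{\,b})^+$ factor appearing when $b_2^+ (W)$ is odd. The reverse inequality follows either by running the same argument on $W$ viewed as a cobordism from $Y_1$ to $Y_0$ (which interchanges $b_2^+$ and $b_2^-$ and negates $\sigma$) or, equivalently, via Spanier--Whitehead duality of the Floer spectra.

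The main technical step is the parity analysis. When $b_2^+ (W)$ is even, the Atiyah--Singer index computation forces the sign-representation factor to vanish, so $\Psi_W$ becomes a pure quaternionic suspension and the $+1$ correction is unnecessary; the analogous statement handles $b_2^- (W)$ even. The Floer $K_G$-split refinement uses that when $SWF(Y_0 , \mathfrak s_0 )$ splits as a wedge of $\Pin(2)$-spectra in the appropriate category, one may postcompose $\Psi_W$ with a projection onto a controlled summand to obtain an equivariant map of strictly lower stable degree; this produces the additional $-1$ in the odd case and $-2$ in the positive-even case. I expect the hardest part to be the careful bookkeeping of the representation-theoretic twists --- the alternation between $\bh$- and $\widetilde{\br}$-factors in the source and target of $\Psi_W$ according to the parities of $b_2^\pm (W)$ and $\sigma (W) /8$ --- and verifying that the finite-dimensional approximation produces a $\Pin(2)$-equivariant map of exactly the claimed stable $\Pin(2)$-degree. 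Once these structural facts are in place, the inequalities follow formally from the monotonicity of $\kappa$ under $\Pin(2)$-equivariant maps.
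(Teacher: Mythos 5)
Note first that the paper you are annotating does not prove this statement: it is quoted from Manolescu \cite{Mano4}, with an outline for rational homology spheres given in \cite{U4}, so your sketch has to be measured against Manolescu's argument. At the level of strategy you do have the right skeleton: the relative Bauer--Furuta invariant of the spin cobordism, a monotonicity property of the invariant under $\Pin (2)$-equivariant stable maps, and applying the same inequality to $W$ with reversed orientation (a spin cobordism from $Y_1$ to $Y_0$, which swaps $b_2^{\pm}$ and negates $\sigma$) to obtain the lower bound. But the technical core is misidentified at exactly the points that produce the constants and parity conditions in the statement. $\kappa$ is not a Borel-(co)homological degree of $SWF(Y,\mathfrak s )$: it is defined through $\Pin (2)$-equivariant K-theory, as the exponent recording where the image of $\widetilde K_G (SWF(Y,\mathfrak s ))$ under restriction to the $S^1$-fixed locus sits relative to powers of the K-theoretic Euler class $z$ of $\bh$ in $R(G)$. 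Working with Borel cohomology instead would produce Manolescu's $\alpha , \beta , \gamma$ rather than $\kappa$, and in that framework the hypothesis ``Floer $K_G$ split'' appearing in the statement cannot even be formulated.

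Second, your parity mechanism is wrong: the number of $\widetilde{\br}$-factors in the target of $\Psi_W$ is (up to trivial summands) $b_2^+ (W)$ itself, and it does not vanish when $b_2^+ (W)$ is even. The refinements for even $b_2^{\pm}$ come from the algebra of $R(\Pin (2))$: only the complexified sign representation $\widetilde{\bc} \cong \widetilde{\br}^{\,2}$ has a K-theoretic Euler class $c$, and relations such as $c^2 = 2c$ and $cz = 2c$ force suspensions by $\widetilde{\br}$ to affect the K-theoretic level only in pairs; writing $b_2^+ (W) = 2j$ or $2j+1$ is what creates the extra $+1$ in the odd case and its disappearance in the even case (and similarly for $b_2^-$ after turning the cobordism around). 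Third, ``Floer $K_G$ split'' is a condition on the ideal in $R(G)$ attached to $SWF(Y_0 , \mathfrak s_0 )$ --- that it is exactly the principal ideal generated by the relevant power of $z$ --- not a wedge decomposition of the spectrum, and the additional gain of $1$ (odd case) or $2$ (positive even case) comes from a finer ideal-membership computation, not from postcomposing $\Psi_W$ with a projection onto a wedge summand. As written, your proposal defers precisely this $K_G$-bookkeeping, which is the actual content of the theorem, so it cannot be completed along the lines you describe without essentially redoing Manolescu's K-theoretic argument.
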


\subsection*{Proof of Theorem \ref{estimate}}  \hfill 
\par
\medskip
Let $cS_i$ $(i=1, \dots , n )$ be the cones over the spherical 3-manifolds $S_i$, which are the regular neighborhoods of the singular points of $X$. 
If $n\ge 2$, $S_i$'s can be connected by 1-handles in $X \setminus \mathrm{Int} \cup_{i=1}^n cS_i $ to form a $\spin$ cobordism 
$W \subset X \setminus \mathrm{Int} \cup _{i=1}^n cS_i$ from $\cup_{i=1}^n S_i$ to the connected sums $S_0 =\sharp_{i=1}^n S_i$ of $S_i$'s. 
Then $X_0 =X \setminus \mathrm{Int} (\cup_{i=1}^n cS_i  \cup W )$ is a $\spin$ cobordism from $S_0$ to $Y$. 
Let $\mathfrak s_0$, $\mathfrak s_W$, $\mathfrak s_{X_0}$ be the $\spin$ structures on $S_0$, $W$, and $X_0$ induced by $\mathfrak s_X$ respectively. 

Since $S_i $ $(i\ge 0 )$ has a metric $g_i$ of positive scalar curvature, $\kappa (S_i , \mathfrak s_i )$ is related to the correction term $n (S_i , \mathfrak s_i , g_i )$ of the Seiberg-Witten-Floer homotopy type of 
$(S_i, \mathfrak s_i )$ with respect to the metric $g_i$ by the following proposition. 

\begin{proposition}\label{psc} \cite{Mano4}
Let $(Y, \mathfrak s)$ be a rational homology 3-sphere with $\spin$ structure that admits a metric $g$ of positive scalar curvature. Then 
$\kappa (Y, \mathfrak s ) = -n( Y, \mathfrak s , g )$ and $(Y, \mathfrak s )$ is Floer $K_G$ split. 
\end{proposition}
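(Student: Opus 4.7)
The plan is to exploit the fact that positive scalar curvature forces the Seiberg--Witten equations on $Y$ to admit no irreducible solutions, so that the Seiberg--Witten Floer homotopy type collapses to an (equivariantly desuspended) representation sphere, and both assertions can be read off directly from this simple model.

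First I would invoke the Lichnerowicz--Weitzenb\"ock formula for the $\spin$ Dirac operator on $(Y,\mathfrak{s},g)$: any solution $(A,\phi)$ of the Seiberg--Witten equations satisfies a pointwise estimate of the form $\tfrac{1}{2}\Delta |\phi|^2 + |\nabla_A \phi|^2 + \tfrac{s}{4}|\phi|^2 + \tfrac{1}{4}|\phi|^4 \le 0$. Since $s>0$, integration forces $\phi\equiv 0$, so every Seiberg--Witten trajectory reduces to a flow line of the Chern--Simons--Dirac functional on the reducible locus. Because $Y$ is a rational homology sphere, this locus consists of a single gauge orbit, and the linearisation of the flow there is governed entirely by the spectrum of the Dirac operator $D_{\mathfrak s, g}$.

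Next I would run Manolescu's finite-dimensional approximation on the global Coulomb slice. Let $V_\lambda^\mu$ be the subspace spanned by Dirac eigenvectors with eigenvalues in $(\lambda,\mu)$, and consider the $\Pin(2)$-equivariant approximate flow on a large ball in $V_\lambda^\mu$. The absence of irreducibles implies the only invariant set is the reducible, whose Conley index is literally the one-point compactification $(V_0^\mu)^+$, a $\Pin(2)$-representation sphere. By the definition of $\mathrm{SWF}(Y,\mathfrak s)$ as the formal desuspension of this Conley index by the appropriate positive eigenspace (and by the shift that accounts for the spectral asymmetry as in the APS index formula), the stable $\Pin(2)$-equivariant homotopy type of $\mathrm{SWF}(Y,\mathfrak s)$ becomes $\Sigma^{-n(Y,\mathfrak s,g)} S^0$ in the reduced equivariant category, where the integer $n(Y,\mathfrak s,g)$ is precisely the correction term associated to the metric $g$.

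Finally, both conclusions follow from this model. The $\kappa$-invariant is defined as the equivariant degree at which a distinguished Bott-type class in the $\Pin(2)$-equivariant (co)homology of $\mathrm{SWF}(Y,\mathfrak s)$ first appears; for a desuspended sphere $\Sigma^{-n} S^0$ this degree is exactly $-n(Y,\mathfrak s,g)$, giving $\kappa(Y,\mathfrak s)=-n(Y,\mathfrak s,g)$. The Floer $K_G$-split property (in Manolescu's sense from \cite{Mano4}) requires a splitting of the equivariant $K$-theory of $\mathrm{SWF}(Y,\mathfrak s)$ compatible with the Bott action; for an actual representation sphere this splitting is tautological, so $(Y,\mathfrak s)$ is automatically Floer $K_G$-split. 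The main obstacle is a bookkeeping one: matching the sign and normalisation of $n(Y,\mathfrak s,g)$, as defined via spectral flow of $D_{\mathfrak s,g}$, with the desuspension shift appearing in Manolescu's construction, so that the equality $\kappa=-n$ holds on the nose rather than up to a conventional constant.
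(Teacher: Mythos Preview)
The paper does not prove this proposition at all: it is stated with the citation \cite{Mano4} and used as a black box. So there is no ``paper's own proof'' to compare against.

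That said, your sketch is essentially the argument one finds in Manolescu's paper. The Lichnerowicz--Weitzenb\"ock step kills the irreducibles, the Conley index of the unique reducible is the one-point compactification of the positive Dirac eigenspace, and after the formal desuspension the Seiberg--Witten Floer spectrum is the sphere $S^{-n(Y,\mathfrak s,g)\mathbb H}$ at the level of $\Pin(2)$-representations. For such a sphere both claims are immediate: $\kappa$ reads off the quaternionic desuspension index, and the Floer $K_G$-split condition is automatic because the reduced equivariant $K$-theory of a representation sphere is free over the Bott element in the required sense. Your closing caveat about sign conventions for $n(Y,\mathfrak s,g)$ is the right place to be careful, but beyond that there is nothing missing from the outline.
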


If $i\ge 1$, 
$\kappa (S_i , \mathfrak s_i )$ is also related to the Fukumoto-Furuta invariant 
$w(S_i, cS_i , \mathfrak s_{cS_i} )$, where $\mathfrak s_{cS_i}$ is the restriction of $\mathfrak s_X$ to $cS_i$. 

 \begin{definition}\label{FF}
 Let $(X, \mathfrak s_X )$ be a 
 compact $\spin$ 4-orbifold with $\spin$ structure $\mathfrak s_X $ bounded by
 a rational homology 3-sphere $(Y, \mathfrak s )$ with $\spin$ structure. 
 Choose a compact spin 4-manifold $(X' , \mathfrak s_{X'} )$ 
 with $\partial (X' , \mathfrak s_{X'}  ) =(Y, \mathfrak s )$ and put $(Z, \mathfrak s_Z ) = (X\cup (-X') , \mathfrak s_X \cup 
 \mathfrak s_{X '} )$. Then the Fukumoto-Furuta invariant $w(Y, X, \mathfrak s_X )$  is defined to be 
 \[
 w(Y, X, \mathfrak s_X ) =-\ind_{\bc} \mathcal D_Z  (\mathfrak s_Z ) +\frac{\sigma (X' )}{8}, 
 \]
 where $\mathcal D_Z (\mathfrak s_Z )$ is the Dirac operator of $(Z, \mathfrak s_Z )$. 
 The value of $w(Y, X, \mathfrak s_X )$ does not depend on the choice of $X'$. 
 (The sign of $w$ is opposite to the original one in \cite{FF} .) 
\end{definition}

Since $\dim \ker\mathcal D_{S_i} (\mathfrak s_i ) =0$, $n ( S_i , \mathfrak s_i , g_i )$ $(i\ge 1 )$ is represented by the eta invariants of the Dirac and the signature operators 
 on $S_i $ as follows:
 \[
  n(S_i , \mathfrak s_i  , g_i ) = -\frac 18 (4\eta^{\mathrm{Dir}} (S_i , \mathfrak s_i  , g_i  ) +\eta^{\mathrm{sign}} (S_i , g_i  )). 
 \] 
  Here we assume that the Clifford multiplication $c$ of the volume form $\mathrm{vol}_{S_i}$ on $S_i$ satisfies 
   $c(\mathrm{vol}_{S_i} )=-1$ as in \cite{U4}. If we choose $c$ satisfying $c(\mathrm{vol} _{S_i}) =1$, $\eta^{\mathrm{Dir}} (S_i , \mathfrak s_i , g_i )$ 
   should be replaced with $-\eta^{\mathrm{Dir}} (S_i , \mathfrak s_i , g_i )$ (\cite{U4}, Remark 4).

 The right hand side of the above formula coincides with $w(S_i , sS_i , \mathfrak s_{cS_i } )$, which is the same as 
   $\overline{\mu} (S_i , \mathfrak s_i )$ (\cite{U4}), and hence 
   \[
  (*) \quad  \kappa (S_i , \mathfrak s_i ) =-n(S_i , \mathfrak s_i , g_i ) =-\overline{\mu} (S_i , \mathfrak s_i ). 
   \]
   By considering the $\spin$ cobordism $W$ from $\cup_{i=1}^n S_i$ to $S_0$, we also have the relation 
   $n(S_0 , \mathfrak s _0 , g_0 ) =\sum_{i=1}^n n (S_i , \mathfrak s_i , g_i )$ (\cite{U4}). It follows that 
   \[
   (**) \quad 
   \kappa (S_0 , \mathfrak s_0 ) = -n(S_0 , \mathfrak s_0 , g_0 ) =- \sum_{i=1}^n n (S_i , \mathfrak s_i  , g_i ) =
   -\sum_{i=1}^n \overline{\mu} (S_i , \mathfrak s_i ).
   \]
   
   Then applying the inequality in Theorem \ref{ineq} (note that $(S_0 , \mathfrak s_0 )$ is 
Floer $K_G$ split) to the $\spin$ cobordism $X_0$ and $(**)$, we obtain the inequality in 
   Theorem \ref{estimate} (2). We note that for a rational homology 3-sphere with $\spin$ structure $(Y, \mathfrak s )$, 
   the mod $2\bz$ reduction of $\overline{\mu} (Y ,\mathfrak s )$ is the Rockhlin invariant $\mu (Y, \mathfrak s )$ of $(Y, \mathfrak s )$. 
   We also have $\kappa (Y, \mathfrak s ) \equiv -n(Y, \mathfrak s , g ) \equiv -\mu (Y, \mathfrak s ) \mod{2\bz}$ for some metric $g$. 
   Thus by the additivity of signature, if we choose a $\spin$ 4-manifold $(W_i , \mathfrak s_{W_i} )$ with 
   $\partial (W_i , \mathfrak s_{W_i}  ) =(S_i , \mathfrak s_i )$, we have 
\[
\kappa (Y, \mathfrak s ) \equiv -\overline{\mu} (Y, \mathfrak s ) \equiv -\frac 18 (\sigma (X_0 ) +\sum_{i=1}^n \sigma (W_i ) ) 
\equiv  -\frac 18 \sigma (X_0 ) -\sum_{i=1}^n \overline{\mu} (S_i , \mathfrak s_i  ) \mod{2\bz}. 
\]
This proves Theorem \ref{estimate} (1).  \qed
   
\medskip   
 On the other hand, $w(Y, X, \mathfrak s_X )$ for $(Y, X )$ in Theorem \ref{estimate} is represented via the eta invariants of $S_i$'s and $(*)$ as follows (\cite{U4}). 

\[
 w(Y, X, \mathfrak s_X ) = \frac{\sigma (X)}{8} +\sum_{i=1}^n \overline{\mu} (S_i , \mathfrak s_i ) .
\]
Furthermore $w(Y,X, \mathfrak s_X ) \equiv \mu (Y , \mathfrak s ) \mod {2\bz}$ since $\dim \ind_{\bc} \mathcal D_Z (\mathfrak s_Z )$ is even, where 
$Z$ is a closed $\spin$ orbifold in the definition of $w(Y, X, \mathfrak s_X )$. 
The following theorem is deduced from these facts and Theorem \ref{estimate}. 

\begin{theorem}\label{compare}
Let $(Y, \mathfrak s)$ be a rational homology 3-sphere with $\spin$ structure and $(X, \mathfrak s_X )$ be a $\spin$ 4-orbifold bounded by $(Y, \mathfrak s )$ 
as in Theorem 1. Then 
\begin{align*}
&\kappa (Y, \mathfrak s ) +w(Y, X, \mathfrak s_X ) \equiv 0 \mod {2\bz}, \\
-k^+ -&\epsilon^+ \le \kappa (Y, \mathfrak s ) +w(Y, X, \mathfrak s_X ) \le k^- +\epsilon^-  .
\end{align*}
\end{theorem}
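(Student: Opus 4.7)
The plan is to derive Theorem \ref{compare} as a direct substitution in Theorem \ref{estimate}, using the identity
\[
w(Y, X, \mathfrak{s}_X) = \frac{\sigma(X)}{8} + \sum_{i=1}^n \overline{\mu}(S_i, \mathfrak{s}_i)
\]
stated immediately after the proof of Theorem \ref{estimate}. Since $\sigma(X) = k^+ - k^-$, this identity lets me rewrite $\tfrac{k^+ - k^-}{8} + \sum_{i=1}^n \overline{\mu}(S_i, \mathfrak{s}_i)$ as $w(Y, X, \mathfrak{s}_X)$ wherever it occurs in Theorem \ref{estimate}.

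For the mod $2\bz$ congruence, I would take the statement of Theorem \ref{estimate}(1) and add $w(Y, X, \mathfrak{s}_X)$ to each side: the right-hand side collapses to $0$, giving $\kappa(Y, \mathfrak{s}) + w(Y, X, \mathfrak{s}_X) \equiv 0 \pmod{2\bz}$. Alternatively, the congruence is already implicit in the observation made during the proof of Theorem \ref{estimate} that both $-\kappa(Y, \mathfrak{s})$ and $w(Y, X, \mathfrak{s}_X)$ reduce to the Rokhlin invariant $\mu(Y, \mathfrak{s})$ modulo $2\bz$.

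For the bounds, I would add the constant $\tfrac{k^+ - k^-}{8} + \sum_{i=1}^n \overline{\mu}(S_i, \mathfrak{s}_i) = w(Y, X, \mathfrak{s}_X)$ to every term of the two-sided inequality in Theorem \ref{estimate}(2). On the left, $-k^+ - \tfrac{k^+ - k^-}{8} - \epsilon^+ - \sum_{i=1}^n \overline{\mu}(S_i, \mathfrak{s}_i)$ collapses to $-k^+ - \epsilon^+$; on the right, $k^- - \tfrac{k^+ - k^-}{8} + \epsilon^- - \sum_{i=1}^n \overline{\mu}(S_i, \mathfrak{s}_i)$ collapses to $k^- + \epsilon^-$; and the middle becomes $\kappa(Y, \mathfrak{s}) + w(Y, X, \mathfrak{s}_X)$. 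This yields the claimed double inequality.

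There is essentially no obstacle beyond Theorem \ref{estimate} itself: the content of Theorem \ref{compare} is not a genuinely new estimate but a repackaging in which the combinatorial contribution $\tfrac{\sigma(X)}{8} + \sum_{i=1}^n \overline{\mu}(S_i, \mathfrak{s}_i)$ is absorbed into the single orbifold invariant $w(Y, X, \mathfrak{s}_X)$. The only nontrivial ingredient is the closed-form expression for $w(Y, X, \mathfrak{s}_X)$, which is obtained by applying the $G$-signature theorem to the closed spin orbifold $Z = X \cup (-X')$ used in Definition \ref{FF}, together with the eta-invariant presentation $n(S_i, \mathfrak{s}_i, g_i) = -\tfrac{1}{8}(4\eta^{\mathrm{Dir}}(S_i, \mathfrak{s}_i, g_i) + \eta^{\mathrm{sign}}(S_i, g_i))$ and the identification of this quantity with $\overline{\mu}(S_i, \mathfrak{s}_i)$. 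Both ingredients are attributed to \cite{U4} and already invoked in the preceding argument, so they may simply be quoted.
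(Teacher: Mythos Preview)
Your proposal is correct and matches the paper's own argument: the paper explicitly states that Theorem~\ref{compare} ``is deduced from these facts and Theorem~\ref{estimate},'' where ``these facts'' are precisely the identity $w(Y,X,\mathfrak s_X)=\tfrac{\sigma(X)}{8}+\sum_i\overline\mu(S_i,\mathfrak s_i)$ and the congruence $w(Y,X,\mathfrak s_X)\equiv\mu(Y,\mathfrak s)\pmod{2\bz}$. Your substitution of $w$ into both parts of Theorem~\ref{estimate} is exactly the intended derivation.
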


If $Y$ is a Seifert rational homology 3-sphere, we can choose $X$ so that $w(Y, X , \mathfrak s_X )=\overline{\mu} (Y, \mathfrak s )$ and 
$k^{\pm}$ are small, which restricts the value of $\kappa (Y, \mathfrak s ) + \overline{\mu} (Y, \mathfrak s )$ (\cite{U4}). 

\medskip
We also note that the $\kappa$-invariant of an almost rational (AR) homology 3-sphere is determined by Dai-Sasahira-Stoffregen \cite{DSS}. 

\begin{theorem} \label{AR}\cite{DSS} 
Let $(Y, \mathfrak s )$ be an AR homology 3-sphere with $\spin$ structure. Then either one of the followings holds. 
\begin{enumerate}
\item If $-\overline{\mu} (Y, \mathfrak s ) =\delta (Y, \mathfrak s)$, then $\kappa (Y, \mathfrak s ) = -\overline{\mu} (Y, \mathfrak s )$. 
\item 
If $-\overline{\mu} (Y, \mathfrak s ) < \delta (Y, \mathfrak s )$, then $\kappa (Y, \mathfrak s ) = -\overline{\mu} (Y, \mathfrak s ) +2$. 
\end{enumerate}
Here $\delta (Y, \mathfrak s )$ is the monopole Fr{\o}yshov invariant of $(Y, \mathfrak s )$. Note that we have always 
$-\overline{\mu} (Y, \mathfrak s ) \le \delta (Y, \mathfrak s )$ \cite{Dai}. 
\end{theorem}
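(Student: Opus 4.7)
The plan is to extract $\kappa$ from an explicit model of the $\Pin(2)$-equivariant Seiberg-Witten Floer spectrum of an AR homology 3-sphere. By N\'emethi, the reduced Heegaard Floer chain complex of an AR space (equivalently, via Kutluhan-Lee-Taubes, the $S^1$-equivariant monopole Floer complex) is determined by the associated graded root of a plumbing representative; in particular it is a wedge of towers, and the bottom grading of the distinguished lowest tower records $2\delta(Y,\mathfrak s)$.

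I would next upgrade this $S^1$-equivariant picture to a $\Pin(2)$-equivariant one. The extra generator $j \in \Pin(2)$ acts on the Floer spectrum, and the key claim is that for an AR manifold the $j$-fixed subspectrum is itself a tower whose bottom grading is controlled precisely by the Neumann-Siebenmann invariant $\overline{\mu}(Y,\mathfrak s)$. This should follow from a direct chain-level model in which the graded root naturally carries a $\Pin(2)$-equivariant structure, shifted by $\sigma(W)/8 \bmod 2 \equiv \mu(Y,\mathfrak s)$ for any $\spin$ plumbing filling $W$, which by the formula of Neumann-Siebenmann equals $\overline{\mu}(Y,\mathfrak s)$ modulo $2\bz$.

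I would then read off $\kappa$. Since $\kappa$ is the $\Pin(2)$-equivariant refinement of the Fr\o yshov invariant, it records the bottom grading of the tower that survives the $j$-action, and for AR manifolds this coincides with the bottom of the $j$-fixed subtower identified above. The dichotomy of the theorem then corresponds to two geometric scenarios: in case (1) the $j$-fixed tower reaches the very bottom of the ambient tower, giving $\kappa = -\overline{\mu} = \delta$; in case (2) the $j$-fixed tower sits exactly two gradings higher, giving $\kappa = -\overline{\mu} + 2$. These are the only possibilities because the graded root of an AR space is ``almost linear'', which rigidifies the $j$-action sufficiently that at most a single two-step offset can occur.

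The main obstacle will be the second step: lifting the graded-root data to an honest $\Pin(2)$-equivariant stable homotopy type rather than merely an $S^1$-equivariant one. Dai-Sasahira-Stoffregen handle this by combining the Dai-Manolescu analysis of $\iota_K$-complexes for AR manifolds with Stoffregen's earlier computation of $\kappa$ for Seifert fibered spaces, effectively showing that every AR homology 3-sphere is $\Pin(2)$-locally equivalent to a connected sum of Seifert-fibered pieces whose $\kappa$ has already been computed, thereby reducing the general AR case to an already-understood model.
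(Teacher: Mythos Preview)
The paper does not prove this statement at all: Theorem~\ref{AR} is simply quoted from Dai--Sasahira--Stoffregen \cite{DSS} as an external input, with no argument given. So there is no ``paper's own proof'' against which to compare your proposal.

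As a sketch of what \cite{DSS} actually do, your outline is in the right neighborhood but blurs several distinct ingredients. Your second step is indeed the crux, and \cite{DSS} do not handle it the way your last paragraph suggests. They do not reduce to connected sums of Seifert pieces via local equivalence; rather, they build a lattice-homotopy type directly from the plumbing data (a $\Pin(2)$-equivariant spectrum modeled on N\'emethi's lattice homology) and prove that for almost-rational graphs it agrees with the genuine Seiberg--Witten Floer spectrum. The computation of $\kappa$ then comes from analyzing that explicit lattice model. Your description of $\kappa$ as ``the bottom grading of the tower that survives the $j$-action'' is also imprecise: $\kappa$ is defined via $\Pin(2)$-equivariant $K$-theory rather than ordinary (Borel) homology, so the passage from the graded-root picture to $\kappa$ requires more than reading off a grading---one needs the $K$-theoretic structure of the model, which is where the specific dichotomy $\kappa=-\overline{\mu}$ versus $\kappa=-\overline{\mu}+2$ emerges. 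Finally, the heuristic that ``at most a single two-step offset can occur because the graded root is almost linear'' is suggestive but not an argument; the actual constraint comes from Dai's inequality $-\overline{\mu}\le\delta$ together with the explicit $K_G$-module structure of the lattice spectrum.
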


\begin{remark}\label{rem1}
 In \cite{F}, \cite{KM}, the Fr{\o}yshov invariant is defined as $h(Y, \mathfrak s ) = -\delta (Y, \mathfrak s )$ so that 
 $d(Y, \mathfrak s ) = -2h(Y, \mathfrak s )$, where $d(Y, \mathfrak s )$ is the Ozsv\'ath-Szab\'o's correction term \cite{OS}. 
 \end{remark}

 For an AR homology 3-sphere $Y$, the correction term $\underline{d} (Y, \mathfrak s )$ of the involutive Floer homology is equal to $-2\overline{\mu} (Y ,
 \mathfrak s )$ \cite{DM}. If $Y$ is also an $L$-space, then the correction terms $\underline{d} (Y, \mathfrak s )$, $d(Y, \mathfrak s )$, 
 $\overline{d} (Y, \mathfrak s )$ are all the same (\cite{HM}). Since $\delta (Y, \mathfrak s ) = d(Y, \mathfrak s )/2$, we have 
 $-\overline{\mu} (Y, \mathfrak s ) =\delta (Y, \mathfrak s) = \kappa (Y, \mathfrak s )$ by the above theorem. $(*)$ is also deduced from this fact because 
 a spherical 3-manifold is rational and also an $L$-space (\cite{Ne1}). 
 
\section{$\kappa$-invariants of Dehn surgeries along knots in $S^3$} 
 
 We apply Theorem \ref{estimate} to a rational homology 3-sphere obtained by Dehn surgery along a knot $K$ in $S^3$. 
 We denote by $K_{p/q}$ the $(p/q)$-surgery along $K$. 
 The set of $\spin$ structures on  the complement $X_K : = S^3 \setminus \mathrm{Int} N(K)$ of  the tubular neighborhood $N(K)$ of $K$ is 
 identified with $\mathrm{Hom}_{\bz_2}  (H_1 (X_K , \bz_2 ) , \bz_2 ) \cong \bz_2$. 
 Hence there are 2 $\spin$ structures $\mathfrak s_0$ and $\mathfrak s_1$ on $X_K$. Here $\mathfrak s_0$ is the one that extends to the unique $\spin$ structure on $S^3$, and $\mathfrak s_1$ is the one that does not extend to $S^3$.  
The $\spin$ structure on $X_K$ corresponding to $c\in \mathrm{Hom}_{\bz_2}  (H_1 (X_K , \bz_2 ) , \bz_2 )$ extends to $K_{p/q}$ if and only if 
$pc (\mu) \equiv pq \mod 2$, where $\mu$ is the meridian of $K$. It follows that only $\mathfrak s_0$ 
(resp. $\mathfrak s_1 $) extends to 
 the $\spin$ structure on $K_{p/q}$ if 
 $p$ is odd and $q$ is even (resp. both $p$ and $q$ are odd), while both $\mathfrak s_0$ and $\mathfrak s_1$ extend to $K_{p/q}$ 
 if $p$ is even and $q$ is odd. We denote the extended $\spin$ structures on $K_{p/q}$ by the same symbols as those on $X_K$. 
 
 \begin{theorem}\label{Dehn} 
 Let $K$ be any knot in $S^3$, and $p$ and $q$ are positive integers such that $\gcd (p,q)=1$, $p+q \equiv 1 \mod 2$. 
 Then 
 \begin{align*} 
&\kappa (K_{p/q} , \mathfrak s_0 ) =\kappa (L ( p,-q ) , \mathfrak s_0  ) , \\ 
& \kappa (K_{-p/q}  , \mathfrak s_0 ) = \text{$\kappa (L ( p , q ) , \mathfrak s_0 )$ or $\kappa (L(p, q ) , \mathfrak s_0 ) +2$} . 
\end{align*} 
Here $\mathfrak s_0$ on $L ( p, \pm q )$ is the $\spin$ structure $\mathfrak s_0$ on the $\mp p/q$-surgery along the unknot, which is 
$L(p, \pm q )$. We note that $\kappa (L(p, \pm q ) ,\mathfrak s_0 ) =-\overline{\mu} (L(p, \pm q ) , \mathfrak s_0 )$. 
 In particular, we have 
 \[
 \kappa (K_{1/2n} , \mathfrak s_0 ) = 
 \begin{cases} 
 0 & \text{if $n>0$}, \\ 
 \text{$0$ or $2$} & \text{if $n<0 $}. 
 \end{cases} 
 \]
\end{theorem}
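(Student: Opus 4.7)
The plan is to apply Corollary \ref{cor} to a $\spin$ $4$-orbifold bounded by $(K_{p/q}, \mathfrak{s}_0)$ that has exactly one isolated orbifold singular point whose link is a lens space, and whose intersection form is appropriately definite.

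Given $K \subset S^3 = \partial B^4$ and coprime $p, q$ with $p+q$ odd, I would form the orbifold $X^{+} = X^{+}_{p/q}(K)$ by attaching to $B^4$ a ``rational $2$-handle'' $H_{p/q}$ along $K$ with slope $p/q$, modelled locally on the quotient orbifold $D^4/\bz_p$ with action of weight $(1,q)$.  This produces an isolated orbifold singularity whose link is $L(p,-q)$ for $p/q > 0$ (respectively $L(p,q)$ with the reversed orientation for slope $-p/q < 0$).  Under the parity hypothesis $p+q \equiv 1 \pmod 2$, the $\spin$ structure $\mathfrak{s}_0$ extends over $X^{+}$: for $p$ odd this is immediate because $\bz_p$ has odd order, and for $p$ even with $q$ odd it comes from a direct verification of the equivariant $\spin$ lift.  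A Mayer--Vietoris analysis of $X^{+} = B^4 \cup H_{p/q}$ along the attaching solid torus, together with the computation of the self-intersection of the generator of $H_2(X^{+};\bq)$ — represented by a Seifert surface for $K$ pushed into $B^4$ and capped off by the orbifold core disk in $H_{p/q}$ — using the Seifert framing and the slope, should yield $b_2^{-}(X^{+})=0$.  Corollary \ref{cor} in the case $k^{-}=0$, $k^{+}\le 2$, together with identity $(*)$ applied to $L(p,-q)$, then gives
\[
\kappa(K_{p/q}, \mathfrak{s}_0) = -\overline{\mu}(L(p,-q), \mathfrak{s}_0) = \kappa(L(p,-q), \mathfrak{s}_0).
\]

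For slope $-p/q$, the analogous orbifold $X^{-}$ has its unique singular point modelled on $L(p,q)$, but the orientation of the rational $2$-handle reverses, so that $b_2^{+}(X^{-})=0$ and $b_2^{-}(X^{-}) \le 2$.  Applying Corollary \ref{cor} in the case $k^{+}\le 2$, $k^{-}\in\{1,2\}$ yields the two-valued conclusion $\kappa(K_{-p/q},\mathfrak{s}_0) = \kappa(L(p,q),\mathfrak{s}_0)$ or $\kappa(L(p,q),\mathfrak{s}_0)+2$.  Specialising to $p=1$, the lens spaces trivialise to $L(1,\pm 2n) = S^3$ and $\kappa(S^3,\mathfrak{s}_0)=0$, whence $\kappa(K_{1/2n},\mathfrak{s}_0) = 0$ for $n>0$ and $\kappa(K_{1/2n},\mathfrak{s}_0)\in\{0,2\}$ for $n<0$.

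The main obstacle is the explicit construction of the orbifolds $X^{\pm}$ and the verification of their properties: in particular, identifying the link of the unique singular point as $L(p,\mp q)$ with the correct orientation, and --- most critically --- establishing the definiteness conditions $b_2^{-}(X^{+})=0$ and $b_2^{+}(X^{-})=0$.  The asymmetry between these two definiteness conditions, depending on the sign of the surgery slope, is precisely the source of the ambiguity in the negative-slope formula.  A secondary technical point is to check that the $\spin$ structure induced by the orbifold on the boundary is indeed $\mathfrak{s}_0$ (the one extending from $S^3 \setminus N(K)$), which amounts to a compatibility check between the standard $\spin$ structure on $B^4$ and the equivariant $\spin$ lift of the $\bz_p$-action on $D^4$.
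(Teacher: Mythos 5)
Your overall strategy (bound $(K_{p/q},\mathfrak s_0)$ by a spin $4$-orbifold with lens-space cone points and small $b_2^{\pm}$, then invoke Corollary \ref{cor}) is the same as the paper's, but the specific orbifold you propose does not do the job, and the final arithmetic is asserted rather than carried out. First, the arithmetic: with $k^+=1$, $k^-=0$ and a single cone point with link $S_1$, Corollary \ref{cor} gives $\kappa=-\tfrac18-\overline{\mu}(S_1,\mathfrak s')$, not $-\overline{\mu}(S_1,\mathfrak s')$; you drop the $-\tfrac{k^+-k^-}{8}$ term and never identify the induced spin structure $\mathfrak s'$ on the cone link, so the claimed equality with $-\overline{\mu}(L(p,-q),\mathfrak s_0)$ does not follow from what you wrote. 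Second, and more seriously, the cone link of your rational handle cannot be $L(p,-q)$. Test the case the theorem highlights, $p/q=1/2n$: a $\bz_p=\bz_1$ quotient handle is an honest $2$-handle (so $\partial(B^4\cup H)$ would be an integer surgery, not $K_{1/2n}$), and a cone on $L(1,\mp 2n)=S^3$ is a ball, so your $X^{\pm}$ would be a smooth $4$-manifold with $H_2\cong\bz$ generated by a class of square $\pm 1/2n$ --- impossible. The natural orbifold filling of the $p/q$-surgery solid torus has a cone point whose order is governed by the denominator $q$ (or, as in the paper, by the tail of a continued fraction expansion of $p/q$), not by $p$, and relating $\overline{\mu}$ of that cone link (together with the leftover $\mp\tfrac18$ coming from $b_2^{\pm}=1$) to $\overline{\mu}(L(p,\mp q),\mathfrak s_0)$ is precisely the computation that constitutes the proof; it is entirely missing from your proposal.

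For comparison, the paper expands $p/q=[\alpha_1,\dots,\alpha_n]$ with all $\alpha_i$ even (possible because $p+q$ is odd), realizes $K_{p/q}$ as the boundary of the spin $4$-manifold $X$ given by the chain with framings $\alpha_1$ (on $K$), $\alpha_2,\dots,\alpha_n$, and collapses the linear sub-plumbing on the last $n-1$ components, whose boundary is a lens space $L(p',-q')$ with $p'/q'=[\alpha_2,\dots,\alpha_n]$, to a cone. This leaves $b_2=1$ with $\sigma=\sgn(p/q)$, and the characteristic-sublink computation gives $\overline{\mu}(L(p',-q'),\mathfrak s')=\tfrac18\sum_{i\ge2}\sgn\alpha_i$, so the leftover $-\tfrac18$ combines with it to yield exactly $-\tfrac18\sum_{i\ge1}\sgn\alpha_i=-\overline{\mu}(L(p,-q),\mathfrak s_0)$; the sign of the slope decides whether the residual class is positive or negative, which is exactly the source of the two-valued answer for $-p/q$. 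You would need to supply an analogue of all of this (the correct cone link, the induced spin structures, and the $\overline{\mu}$ identity) before your construction yields the theorem.
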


\begin{proof} 
First suppose that $p>q >1$. Since $p+q \equiv 1 \mod 2$, we have the 
continued fraction expansion of $p/q$ of the following form. 

\[
 (\dagger ) \quad 
\frac{p}{q} =[\alpha_1 , \alpha_2 , \dots , \alpha_n ] =
\alpha_1 -\cfrac{1}{\alpha_2 -\cfrac{1}{\cdots -\cfrac{1}{\alpha_n}}}
\]
where $n\ge 2$ and $\alpha_i \equiv 0 \mod 2$ and $|\alpha_i | \ge 2$ for all $\alpha_i$. 
Then $K_{p/q}$ bounds a 4-manifold $X$ represented by the framed link $\mathcal L$ in Figure 1. 

\begin{figure}
\begin{center}
\includegraphics[width=7cm, clip]{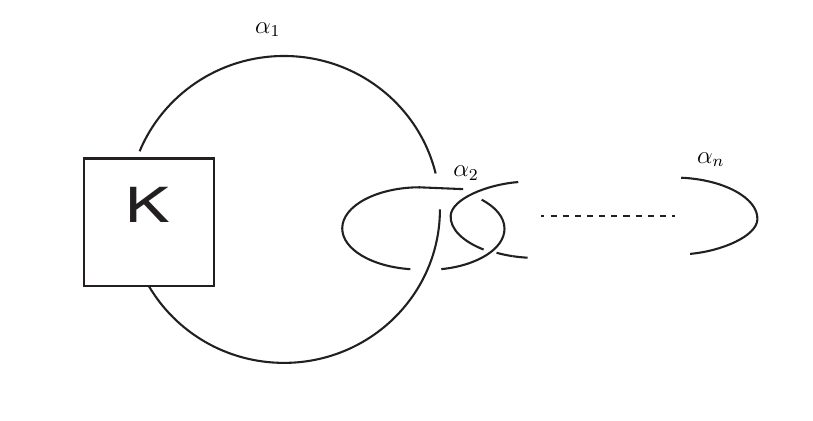}
\caption{The framed link $\mathcal L$}
\end{center}
\end{figure}

Since every $\alpha_i$ is even, $X$ has a $\spin$ structure $\mathfrak s_X$ whose restriction to $K_{p/q}$ is 
$\mathfrak s_0$ (see Remark \ref{rem2} (1) below). The sublink $\mathcal L_0$ of $\mathcal L$ consisting of the components of framing 
$\alpha_i$ $(i\ge 2 )$ represents a plumbing $X_{\Gamma}$ whose boundary is a lens space $L(p' , -q' )$, where 
$p' / q' = [\alpha_2 , \cdots , \alpha_n ]$. We can push $X_{\Gamma}$ into $\mathrm{Int} X$ and attach the cone 
$cL ( p ' , -q ' )$ over $L(p' , -q' )$ to the complement $X_0 =X\setminus \mathrm{Int} X_{\Gamma}$ along $L ( p ' , -q' )$ to form a 4-orbifold 
$\widehat X$. The $\spin$ structure $\mathfrak s_X$ over $X_0$ can be extended to $cL(p' , -q ' )$ and form the orbifold $\spin$ structure 
$\mathfrak s_{\widehat X}$ on $\widehat X$ (This procedure is similar to those in \cite{S}, \cite{U2}). The intersection matrices of $X$ and $X_{\Gamma}$ are 
congruent to the diagonal matrices whose diagonal entries are 
$\{ [\alpha_i, \alpha_{i-1} , \cdots , \alpha_n ]  \ | \  i \ge 1 \}$ and $\{ [\alpha_i , \alpha_{i-1} , \cdots , \alpha_n ] \ | \ i\ge 2 \}$ respectively. 
Hence $\sigma (X_0 ) =\sigma (X) -\sigma (X_{\Gamma} ) $ is the sign of $[\alpha_1 , \cdots , \alpha_n ] =p/q $, which is 1. 
Since $b_2 ( X_0 ) =b_2 (X) -b_2 (X_{\Gamma} ) =1$, we have 
$b^+_2 (\widehat X) = b_2^+ (X_0 ) =1$ and $b^-_2 (\widehat X) = b_2^- (X_0 ) =0$ and hence by Corollary \ref{cor}, 
\[
 \kappa (K_{p/q}, \mathfrak s_0 ) = -\frac 18 -\overline{\mu} (L ( p' , -q ' ) , \mathfrak s '  ) ,
\]
where $\mathfrak s '$ is the restriction of $\mathfrak s_X $ to $L (p' , -q ' )$. Since $\sgn [\alpha_i , \cdots , \alpha_n ] =\sgn \alpha_i $ and 
$\sgn \alpha_1 =1$ and the characteristic sublink of $\mathcal L_0$ associated with $\mathfrak s'$ is empty, 
we have $\overline{\mu} (L ( p ' , -q' ) , \mathfrak s' )=\frac 18 \sigma (X_{\Gamma} )$ and 
\[
\kappa (K_{p/q} , \mathfrak s_0 ) = -\frac 18 (1+\sigma (X_{\Gamma} ) ) =-\frac 18 \sum_{i=1}^n \sgn \alpha_i ,
 \]
 which is the same as $-\overline{\mu} (L ( p ,-q ) , \mathfrak s_0 ) = \kappa (L( p, -q ) , \mathfrak s_0 )$. In fact, $\kappa (K_{p/q} , \mathfrak s_0 )$ 
 does not depend on $K$, and hence is the same as the $\kappa$-invariant of the $p/q$-surgery along the unknot, which is $L(p, -q )$. 
 In case of $K_{-p/q}$, we form a $\spin$ 4-manifold $X'$ with $\partial X' = K_{-p/q}$ represented by a framed link $\mathcal L '$, which is obtained from 
$\mathcal L$ by replacing the framings $\alpha_i$ with $-\alpha_i$. There is a plumbing $-X_{\Gamma}$ corresponding to the sublink of $\mathcal L '$, which 
consists of the components  
of framing $-\alpha_i$ $(i\ge 2 )$ with 
$\partial (-X_{\Gamma} ) =L (p ' , q ' )$ in $X'$. Then by replacing it with the cone over $L ( p ' , q ' )$, we also obtain the $\spin$ 4-orbifold $\widehat X '$. 
 In this case $b^+_2 (\widehat X ' ) =0$ and $b^- (\widehat X ' ) =1$ and hence by Corollary \ref{cor},  
 \[
 \kappa (K_{-p/q}  , \mathfrak s_0 ) =\frac 18 -\overline{\mu} (L ( p ' , q ' ) , \mathfrak s' ) \ \text{or} \ 
 2+\frac 18 -\overline{\mu} (L(p' , q ' ) , \mathfrak s ' ) ,
 \]
 where $\mathfrak s'$ is the restriction of the $\spin$ structure on $X ' $ to $L ( p' , q ' )$. Here we have 
\[
\frac 18 -\overline{\mu} (L (p' , q ' ) , \mathfrak s ' ) =\frac 18 \sum_{i=1}^n \sgn \alpha_i =-\frac 18 \overline{\mu} (L ( p , q ), \mathfrak s_0 ) 
= \kappa (L ( p ,q ) , \mathfrak s_0 ).
\]
 
 If $p$ is even and $q=1$, $K_p$ bounds a $\spin$ 4-manifold $X$ represented by a framed link $K$ with framing $p$ whose $\spin$ structure $\mathfrak s_X$ 
 induces $\mathfrak s_0$ on $K_p$. In this case we apply Manolescu inequality in Theorem 2 (by putting $Y_0 =S^3 $) directly to $(X, \mathfrak s_X )$. Then 
 $\kappa (K_p , \mathfrak s_0 ) =-1/8  = -\overline{\mu} (L ( p, -1 ) , \mathfrak s_0 ) = \kappa (L ( p, -1 ) , \mathfrak s_0 )$, 
 and $\kappa (K_{-p} , \mathfrak s_0 )$ is either $\kappa (L ( p , 1 ) , \mathfrak s_0 )$ or $\kappa (L(p , 1 ), \mathfrak s_0 ) +2$ since 
 $\kappa (Y_{\pm p}  , \mathfrak s_0 ) \equiv -{\mu} (Y_{\pm p} , \mathfrak s_0 )  \equiv \mp 1/8 \pmod{2\bz}$. Thus we prove 
Theorem \ref{Dehn} in the first case. 

Next suppose that $q>p>0$. Then there is a continued fraction expansion of $q/p$ of the form 
\[
\frac qp =[\beta_2 , \dots , \beta_n ] \quad n\ge 2 , \ |\beta_i |\ge 2 , \ \beta_i  \equiv 0\mod 2  \ \text{for all $i \ge 2$}.
\]
In this case $K_{p/q}$ bounds a $\spin$ 4-manifold $X ' $ represented by a framed link $\mathcal L '$, which is obtained from the above 
$\mathcal L$ by replacing the framing $\alpha_1$ with $0$, and $\alpha_i $ with $-\beta_i$ for $i\ge 2$. The $\spin$ structure $\mathfrak s_{X ' }$ 
on $X ' $ induces $\mathfrak s_0$ on $K_{p/q}$. Then we have a plumbing $X_{\Gamma} '$ in $X ' $ corresponding to the sublink, which consists of the components of framing 
$-\beta_i$ $ (i\ge  2 )$. The boundary of $X_{\Gamma} ' $ is a lens space $L ( p ' , q ' ) $, where $p' /q ' = 
[\beta_2 , \cdots , \beta_n ]$. Again we can push $X_{\Gamma} ' $ into $\mathrm{Int} X ' $ and replace it with the cone 
$cL( p ' , q ' ) $ over $L ( p ' , q ' )$ to form a $\spin$ 4-orbifold $\widehat X '$. The intersection matrices of $X '$ and $X_{\Gamma} '$ are 
congruent to the diagonal matrices whose diagonal entries are 
$\{ 1/[\beta_2 , \cdots , \beta_n ] , -[\beta_2 , \cdots , \beta_n ] , \cdots -\beta_n \}$ and 
$\{ - [ \beta_2 , \cdots , \beta_n ] , \cdots , -\beta_n \}$ respectively. Hence $\sigma (\widehat X ' )$ is the sign of 
$1/[\beta_2 , \cdots , \beta_n ] =p/q$ and hence $1$. It follows that $b_2^+ (\widehat X ' ) =1$ and $b_2^- (\widehat X ' ) =0$. 
 Thus again by Corollary \ref{cor} we have 
 \begin{align*}
& \kappa (K_{p/q} , \mathfrak s_0 ) =-\frac 18 -\overline{\mu} (L ( p ' ,q ' ) , \mathfrak s ' )  = 
 -\frac 18 ( 1-\sum_{i=2}^n \sgn [\beta_i , \cdots , \beta_n ] )  \\
 &=
 -\frac 18 (\sgn (1/[\beta_2 , \cdots , \beta_n ] + \sum_{i=2}^n  \sgn [-\beta_i , \cdots , -\beta_n ]) \ (\mathfrak s ' = \mathfrak s_{X '} |_{L ( p ' , q ' )}) ,
 \end{align*}
which is the same as $-\overline{\mu} (L ( p,-q ) , \mathfrak s_0 ) = \kappa (L(p,- q ), \mathfrak s_0 )$. 
 In case of $K_{-p/q}$, it bounds a $\spin$ 4-manifold $X ''$ represented by a framed link $\mathcal  L ''$, which is obtained 
 from $\mathcal L ' $ by replacing the framing $-\beta_i$ with $\beta_i $ $(i\ge 2 )$. The $\spin$ structure $\mathfrak s_{X '' }$ also induces $\mathfrak s_0 $ 
 on $K_{-p/q}$. 
 Then we obtain the $\spin$ 4-orbifold $\widehat X ''$ by deleting the plumbing $-X_{\Gamma} '$ with $\partial (-X_{\Gamma} ' ) =L ( p' , -q ' )$ from 
$X ''$ and attaching the cone $c L ( p ' , -q ' )$ over $L(p ' , -q ' )$. In this case $b_2^+ (\widehat X '' ) =0$ and $b_2^- (\widehat X  '' ) =1$ and hence 
by Corollary \ref{cor} we have 
\[
\kappa (K_{-p/q} , \mathfrak s_0 ) = \frac 18 -\overline{\mu} (L ( p ' , -q ' ) , \mathfrak s '  ) \ \text{or} \ 
2+\frac 18 -\overline{\mu} (L ( p' , -q ' ) , \mathfrak s ' ) ,
\]
where $\frac 18 -\overline{\mu} (L ( p ' , -q ' ) , \mathfrak s ' ) =-\overline{\mu} (L (p, q ) , \mathfrak s_0 ) =\kappa (L ( p, q ) , \mathfrak s_ 0 )$. 
Thus we obtain the required result in the second case. 
\end{proof}

On the other hand, the value of $\kappa$ may depend on the choice of $K$ in case of a $p/q$ surgery with $p$ and $q$ odd. In fact, 
examples below show the following proposition. 

\begin{proposition}\label{torus} 
Every integer is realized as the value of the $\kappa$-invariant of a $(+1)$ surgery along some torus knot. 
\end{proposition}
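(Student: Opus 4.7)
The plan is to exhibit a one-parameter family of torus knots $T_{p_m, q_m}$ and show that, as $m$ varies, the invariant $\kappa((T_{p_m, q_m})_{+1})$ takes every integer value. The key simplification is that $(+1)$-surgery on a torus knot yields a Brieskorn integer homology sphere, and Brieskorn spheres are almost rational, so Theorem \ref{AR} reduces the whole problem to computing the Neumann-Siebenmann invariant $\overline{\mu}$ (and checking which of the two cases of Theorem \ref{AR} occurs).

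First, I would identify the surgery. A classical Dehn-surgery calculation (going back to Moser) shows that $(+1)$-surgery on $T_{p,q}$ is, up to orientation, the Brieskorn homology sphere $\Sigma(p,q,pq-1)$ (or $\Sigma(p,q,pq+1)$, depending on sign conventions of the framing and handedness of the knot). In particular it is Seifert-fibered over $S^2$ with three exceptional fibers, hence AR; so Theorem \ref{AR} applies, giving
\[
\kappa((T_{p,q})_{\pm 1}) \in \{-\overline{\mu}, \ -\overline{\mu}+2\}.
\]

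Second, I would compute $\overline{\mu}$ for a judicious family. The natural candidate is $T_{2, 2m+1}$, so that $(T_{2,2m+1})_{+1}$ is $\pm\Sigma(2, 2m+1, 4m+1)$ (or $\pm\Sigma(2,2m+1,4m+3)$). For these Brieskorn spheres, the Neumann-Siebenmann invariant admits a closed form that can be read off from the standard negative-definite star-shaped plumbing: it is $\tfrac{1}{8}$ times the sum of the signs of the relevant principal minors, which specializes to a known linear function of $m$. By choosing $m$ and the orientation of the surgery appropriately, this quantity will run through every integer. This is where the bookkeeping effort is concentrated.

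Third, I would pin down which branch of Theorem \ref{AR} occurs. Because $T_{2, 2m+1}$ is an L-space knot and $+1$ lies in its L-space surgery range (for all $m \ge 1$), $(T_{2,2m+1})_{+1}$ is an L-space, so $\delta = d/2$ is computable from the Heegaard-Floer correction term of the plumbing. One then checks that $\delta = -\overline{\mu}$ for this family, placing us in case (1) of Theorem \ref{AR}; hence $\kappa = -\overline{\mu}$, and the linear growth of $\overline{\mu}$ yields every integer.

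The main obstacle is the second step: not the principle of the computation, which is routine from the plumbing description, but the combinatorial verification that over the two allowed orientations and the infinite family of parameters $m$, every integer is actually hit (rather than, say, every integer in an arithmetic progression). One needs to exhibit two or three sub-families of torus knots whose $\overline{\mu}$-values, taken together, cover all residue classes. Care with signs and the convention for the Clifford multiplication (as in the footnote after $(*)$) is essential to make the arithmetic come out correctly.
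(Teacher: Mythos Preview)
Your plan has a genuine gap in the third step, and it is the crucial one. The claim that ``$+1$ lies in the L-space surgery range of $T_{2,2m+1}$ for all $m\ge 1$'' is false: the genus of $T_{2,2m+1}$ is $m$, so the L-space slopes are $r\ge 2m-1$, and $+1$ is an L-space slope only for $m=1$. For $m\ge 2$ the manifold $(T_{2,2m+1})_{+1}$ is not an L-space, so you cannot conclude $\delta=-\overline\mu$ this way and cannot decide between the two branches of Theorem~\ref{AR}. There is a second, related problem: $(T_{2,2m+1})_{+1}\cong -\Sigma(2,2m+1,4m+1)$, and the AR orientation (the one to which Theorem~\ref{AR} applies) is $\Sigma(2,2m+1,4m+1)$, not its reverse. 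Since $\kappa$ has no simple behaviour under orientation reversal, knowing $\kappa(\Sigma)$ via Theorem~\ref{AR} does not give you $\kappa(-\Sigma)$.

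The paper avoids both issues by invoking a different input, the result from \cite{U4} that for a Seifert rational homology sphere $Y$ with a singular fibre of \emph{even} multiplicity and \emph{positive} degree one has $\kappa(Y,\mathfrak s)=-\overline\mu(Y,\mathfrak s)$ directly. This applies to $Y=-\Sigma(2,4k+1,8k+3)$ and $Y=-\Sigma(2,4k+1,8k+1)$ (the orientation with positive degree, which is exactly the $(+1)$-surgery orientation), so no comparison with $\delta$ and no L-space hypothesis is needed. The $\overline\mu$-computation you outlined is the right one; with the \cite{U4} result in hand one gets $\kappa((T_{2,-4k-1})_1)=k$ and $\kappa((T_{2,4k+1})_1)=-k$, which together with the unknot realizes every integer.
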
 

\begin{remark}\label{rem2}
\begin{enumerate}
 \item 
 Let $L_i$ be the component of $\mathcal L$ with framing $\alpha_i$ $(L_1 =K )$, and $\mu_i$ be a meridian of $L_i$. Then $L_i$ and $\mu_i$ are 
 oriented so that we have the following relations in $H_1 (K_{p/q} , \bz )$. 
 \[
 \begin{pmatrix} -\alpha_i & -1 \\ 1 & 0 \end{pmatrix} 
\begin{pmatrix} \mu_i \\ \mu_{i-1}  \end{pmatrix}  = \begin{pmatrix} \mu_{i+1} \\ \mu_i \end{pmatrix} \ (1\le i \le n ) , 
\]
where $(\mu_1 , \mu_0 )$ corresponds to  the pair $(\mu , \lambda )$ of the meridian and the preferred longitude of $K \subset S^3$, and 
$(\mu_{n+1} , \mu_n )$ corresponds to the pair of the meridian and the longitude of the solid torus attached to 
$X_K$ by the $(p/q)$ surgery along $K$ (and hence $\mu_0 = \mu_{n+1} =0$ in $H_1 (K_{p/q} , \bz )$). 
If we put 
\[
\begin{pmatrix} \widetilde p & \widetilde q \\ \widetilde r & \widetilde s \end{pmatrix} 
=\begin{pmatrix} -\alpha_n & -1  \\ 1 & 0 \end{pmatrix}  \begin{pmatrix} -\alpha_{n-1} & -1 \\ 1 & 0 \end{pmatrix} 
\cdots \begin{pmatrix} -\alpha_1 & -1 \\ 1 & 0 \end{pmatrix}, 
\]
then $(\widetilde p , \widetilde q ) = \pm (p, q )$. We note that the set of $\spin$ structures on $K_{p/q}$ is also identified with 
the set of $\widetilde c \in \mathrm{Hom} (H_1 (S^3 \setminus \cup_{i=1}^n L_i , \bz ) , \bz_2 )$ satisfying 
\[
\sum_{j=1}^n \mathrm{lk} (L_i , L_j ) \widetilde c ( \mu_j) \equiv \mathrm{lk} (L_i , L_i ) =\alpha_i \pmod 2 \ (1 \le i \le n ).
\]
It follows that $\widetilde c \equiv 0$ (corresponding to the $\spin$ structure on $K_{p/q}$ induced by $\mathfrak s_X$), if and only if 
$c (\mu ) \equiv 0 $ (corresponding to $\mathfrak s_0$) since $\alpha_i$ is even. 
On the other hand, if $p$ is even and $q$ is odd, then $n$ is odd in the continued fraction $(\dagger )$. Hence 
$\widetilde c \in \mathrm{Hom} (H_1 (S^3 \setminus \cup_{i=1}^n L_i  , \bz ) , \bz_2 )$ satisfying 
\[
\widetilde c (\mu_i )\equiv 1\pmod 2 \quad \text{if $i$ is odd} , \ \widetilde c (\mu_ i ) \equiv 0\pmod 2  \quad \text{if $i$ is even } 
\]
corresponds to $\mathfrak s_1$, whose associated characteristic sublink of $\mathcal L$ is $\cup_{\text{$i$ odd}} L_i$.

\item $K_{1/2m}$ bounds a compact $\spin$ 4-manifold $X$ represented by a framed link consisting of $K$ and its meridian with framings $0$ and $-2m$
 respectively. A direct application of Theorem \ref{ineq} (by putting $Y_0 =S^3$) to $X$ shows that $\kappa (K_{1/2m} , \mathfrak s _0 ) = 0$ or $2$. 
 Theorem \ref{Dehn} 
slightly improves the estimate of $\kappa$ when $m>0$. 
\item If we reverse the orientation of $K_{p/q}$, we obtain $\overline{K}_{-p/q}$, where $\overline K$ is the mirror image of $K$. In case of 
$K_{p/q}$ with $p>0$, $q>0$ and $p+q \equiv 1\mod 2$, the computation of the above theorem shows that 
\[
\kappa (\overline{K}_{-p/q} , \mathfrak s_0 ) = -\kappa (K_{p/q} , \mathfrak s_0 ) \ \text{or} \  -\kappa (K_{p/q} , \mathfrak s_0 ) +2. 
\]
We can see that both cases actually 
occur (examples below). 

\item 
Let $(Y, \mathfrak s)$ be a rational homology 3-sphere $Y$ with $\spin$ structure $\mathfrak s $, which bounds a compact 4-manifold $X$ represented by a framed link 
$\mathcal L$. Suppose that 
there is a sublink $\mathcal L_0$ of $\mathcal L$ which forms a plumbing $X_{\Gamma}$ whose boundary is a spherial 3-manifold $S$, and 
contains the characteristic sublink of $\mathcal L$ corresponding to $\mathfrak s$. Then by replacing $X_{\Gamma}$ with the cone over $S$ we obtain a
 $\spin$ 4-orbifold with smaller Betti numbers, from which we could restrict the value of $\kappa (Y, \mathfrak s )$. 
 In case of $(K_{p/q}, \mathfrak s_1 )$ with $p+q \equiv  1 \pmod 2$ or $(K_{p/q} , \mathfrak s_1 )$ with 
 $p \equiv q \equiv 1 \pmod 2$, $K_{p/q}$ bounds a compact 4-manifold $X$ represented by a framed link $\mathcal L$ similar to Figure 1, which is obtained by 
 the continued fraction expansion of $p/q$. But in either case, we cannot obtain an orbifold 
 by collapsing the characteristic sublink $\mathcal L_0$ of $\mathcal L$ corresponding to 
$\mathfrak s_1$ if $K$ is nontrivial, because $\mathcal L_0$ contains a component $K$. 
 \end{enumerate}
 
\end{remark}
 
 \subsection*{Some examples} \hfill 
 
 \par
 \medskip
 Let $T_{p,q}$ be the right-handed $(p,q)$ torus knot. Then the $(1/n)$-surgery along $T_{p,q}$ is diffeomorphic to a Brieskorn homology 3-sphere up to 
 orientation as follows: 
 \begin{align*}
 &
 (T_{p,-q} )_{-1/n } \cong \Sigma (p,q,pqn-1), \ (T_{p, q })_{-1/n } \cong \Sigma (p,q,pqn+1 ), \\
 &
 (T_{p,q})_{1/n} \cong -\Sigma (p,q,pqn-1), \ (T_{p,-q } )_{1/n} \cong -\Sigma (p,q,pqn+1 ) 
 \end{align*}
 for $p,q, n >0$. Then by Theorem 5, $\kappa (-\Sigma (p,q, 2pqm-1 ) ) = \kappa (-\Sigma (p,q, 2pqm+1 ) )=0$, while 
$\kappa (\Sigma (p,q, 2pqm-1 ))$ and $\kappa (\Sigma (p,q, 2pqm+1 )$ are either $0$ or $2$ if $m>0$, and both cases occur. For example, 
$(T_{2,3} )_{-1/2} \cong \Sigma (2,3, 13)$ bounds a contractible manifold, and hence $\kappa (\Sigma (2,3,13 ))=0$. On the other hand 
$(T_{2,-3} )_{-1/2m} \cong \Sigma (2,3,12m-1 )$ and $\kappa (\Sigma (2,3, 12m-1 ) ) =2$ \cite{Mano4}. 
Another example is $(T_{2,-7})_{-1/2k} \cong \Sigma (2, 7 , 28k-1 )$. In this case $\overline{\mu} = -\beta =0$ and $\delta =2$ (\cite{Stoff}), and hence 
$\kappa (\Sigma (2,7, 28k-1 ) ) =2$ by Theorem 4. 

On the other hand, 
$\kappa (K_{1/n} )$ for $n$ odd may vary according to the choice of $K$. For example, 
$(T_{2, -4k-1 })_{1} \cong -\Sigma (2,4k+1 , 8k+3 )$, where $\Sigma (2,4k+1 , 8k+3 )$ has the Seifert invariants of the form 
\[
\{ (2,1 ) , (4k+1, 2k ) , (8k+3 , -8k-2 ) \} 
\]
(we follow the sign convention in \cite{MOY}, which is opposite to those in \cite{FFU}, \cite{U2}), and bounds a $\spin$ plumbing $X(\Gamma )$
corresponding to the weighted graph $\Gamma$ in Figure 2. 

\begin{figure}
\begin{center}
\includegraphics[width=8cm, clip]{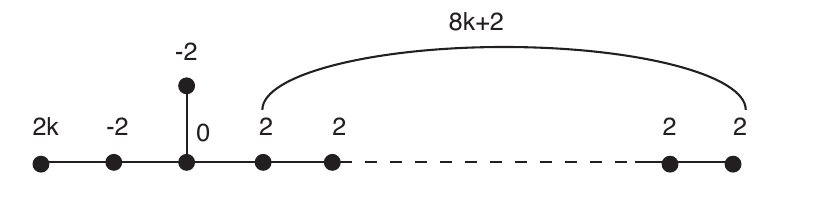}
\caption{ $\Gamma$ corresponding to $\Sigma (2,4k+1 , 8k+3 )$}
\end{center}
\end{figure}

 Hence we have $\overline{\mu} (\Sigma (2,4k+1 , 8k+3 )) =\frac 18 \sigma (X(\Gamma )) =k$.
(We note that $\sigma (X(\Gamma ))$ is the sum of the sign of $\deg (\partial X(\Gamma ))$ (which is $-1$) and the signs of the weights of the vertices of 
$\Gamma$ other than the central one in Figure 2.) 
  In \cite{U4} we claim that 
 if a Seifert rational homology 3-sphere $Y$ contains a singular fiber of even multiplicity and has positive degree, then 
 $\kappa (Y, \mathfrak s ) =-\overline{\mu} (Y, \mathfrak s )$. It follows that 
 \[
 \kappa (-\Sigma (2, 4k+1 , 8k+3 )) =-\overline{\mu} (-\Sigma (2,4k+1 , 8k+3 )) =\overline{\mu} (\Sigma (2, 4k+1 , 8k+3 )) =k . 
 \]

Likewise $(T_{2, 4k+1} )_1 \cong -\Sigma (2, 4k+1, 8k+1)$, where $\Sigma (2, 4k+1 , 8k+1 )$ has the Seifert invariants of the form 
\[
\{ (2, -1 ) , (4k+1 , -2k) , (8k+1 , 8k )\}  
\]
and hence 
\[
\kappa (-\Sigma (2, 4k+1 , 8k+1 )) = \overline{\mu} (\Sigma (2, 4k+1 , 8k+1 )) =-k  ,
\]
by computation similar to the above case. 
This proves Proposition \ref{torus}.

 \begin{figure}
\begin{center}
\includegraphics[width=8cm, clip]{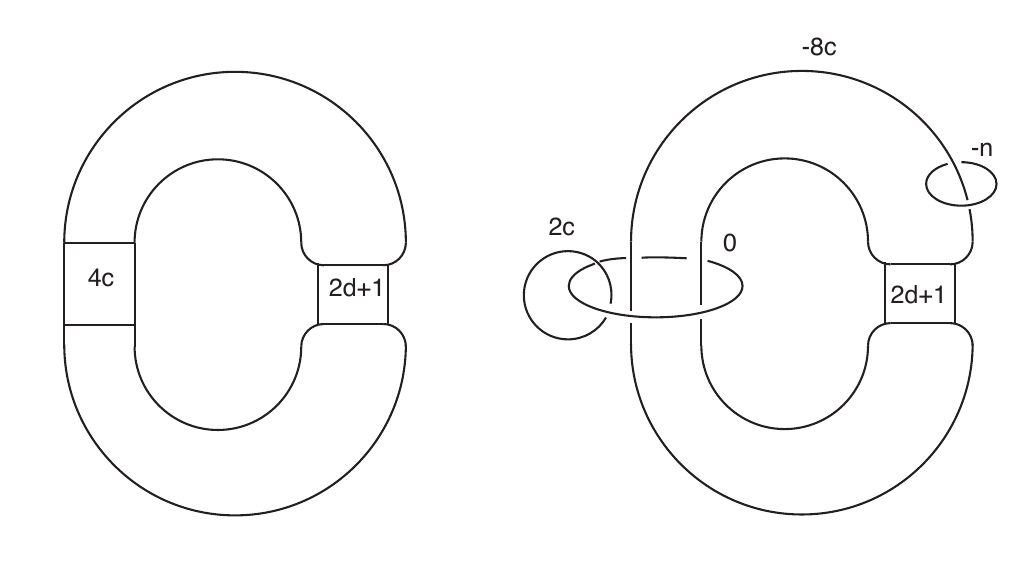}
\caption{$K(4c, 2d+1)$ and $\mathcal L$}
\end{center}
\end{figure}

 As another example, we consider the knot $K (4c, 2d+1 )$ with $c\ne 0$ in Figure 3. Here \fbox{$\ell$} means $\ell$ times right-handed half twists if $\ell >0$, and 
 $|\ell |$ times left-handed half twists if $\ell<0$. 
 
 \begin{proposition}\label{twist}
 
 Let $Y = K(4c, 2d+1 )_{1/n}$ with $c\ne 0$ and $n$ odd. Then for the unique $\spin$ structure $\mathfrak s$ on $Y$, 
we have 
\[
 \kappa (Y, \mathfrak s ) = 
 \begin{cases} -c & \text{if $n>0$}, \\
 \text{$-c$ or $-c+2$} &\text{if $n<0$.} 
 \end{cases}
 \]
 \end{proposition}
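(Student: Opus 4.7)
My plan follows the strategy used in the proof of Theorem \ref{Dehn}, adapting it to the twist knot $K(4c, 2d+1)$. Since both $1$ and $n$ are odd, the unique spin structure $\mathfrak{s}$ on $Y = K(4c, 2d+1)_{1/n}$ corresponds to $\mathfrak{s}_1$ (the one that does not extend to $S^3$), so the direct approach of Theorem \ref{Dehn} is not available; instead, I would exploit the special structure of $K(4c, 2d+1)$, as suggested in Remark \ref{rem2}(4).

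First, I would use the framed link $\mathcal{L}$ depicted in Figure 3 to describe a compact spin $4$-manifold $X$ with $\partial X = Y$. The $4c$ and $2d+1$ twist boxes can be resolved by adjoining meridian components with suitable integral framings (obtained by the standard twist-box trick together with blow-ups/slam-dunks), turning the knot into an unknotted component and producing a framed link with integer framings whose parities allow extension of $\mathfrak{s}_1$. I would then identify the characteristic sublink of $\mathcal{L}$ corresponding to $\mathfrak{s}_1$.

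Second, I would locate a plumbing sublink $\mathcal{L}_0 \subset \mathcal{L}$ whose associated plumbed manifold $X_{\Gamma}$ has boundary a spherical $3$-manifold $S$ (a lens space, arising from the continued fraction expansion of a fraction built from $4c$ and $2d+1$), and which \emph{contains} the characteristic sublink of $\mathcal{L}$. The contribution of the $(2d+1)$-twist box must be absorbed into $X_{\Gamma}$ so that the remaining components outside $\mathcal{L}_0$ do not meet the characteristic sublink --- this is exactly the situation alluded to in Remark \ref{rem2}(4) for special $K$. Pushing $X_{\Gamma}$ into $\mathrm{Int}(X)$ and capping it off by the cone $cS$ produces a $\spin$ $4$-orbifold $\widehat{X}$ bounded by $Y$.

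Third, I would verify by a signature computation, analogous to the one in the proof of Theorem \ref{Dehn}, that $b_2^+(\widehat{X}) \le 2$ and $b_2^-(\widehat{X}) = 0$ when $n > 0$, and $b_2^+(\widehat{X}) = 0$, $b_2^-(\widehat{X}) \le 2$ when $n < 0$, so that Corollary \ref{cor} determines $\kappa(Y, \mathfrak{s})$ either uniquely or up to a $\pm 2$ ambiguity. The value of $\kappa_0 = -\tfrac{k^+ - k^-}{8} - \overline{\mu}(S, \mathfrak{s}_S)$ must then be computed; since $\overline{\mu}(S, \mathfrak{s}_S)$ equals $\tfrac{1}{8}\sigma(X_{\Gamma})$ (the characteristic sublink of $X_{\Gamma}$ being empty by construction), the computation reduces to a signature count over the vertices of $\Gamma$. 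The main obstacle --- and the point where the calculation becomes delicate --- is verifying that the parameter $d$ drops out of the final signature sum, leaving the answer $-c$, in accordance with the theorem's assertion that $\kappa(Y, \mathfrak{s})$ is independent of $d$. I expect this independence to reflect the fact that changing $d$ alters $Y$ only by a cobordism that plays no role in the orbifold signature computation modulo the combinatorics of the continued fraction arising from the $4c$-twist region.
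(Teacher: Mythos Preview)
Your outline follows the right template but has two genuine gaps relative to the paper's argument.

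First, the paper does not collapse a single plumbing sublink. The framed link $\mathcal{L}$ in Figure~3 has four unknotted components $L_1, L_2, L_3, L_4$ whose framings depend only on $c$ and $n$; the parameter $d$ disappears already at the Kirby--calculus stage, so your concern about $d$ dropping out of a later signature sum is misplaced. The characteristic sublink for $\mathfrak{s}$ is $L_3$ alone. The paper then collapses \emph{two} disjoint sublinks, namely $L_1$ (bounding $L(2c,-1)$) and $L_3 \cup L_4$ (bounding $L(p,q)$ with $p/q = [8c,n]$), obtaining an orbifold $\widehat{X}$ with two cone points and $b_2(\widehat{X}) = 1$. This second collapse is essential: if you cone off only the connected sublink $L_3 \cup L_4$ containing the characteristic sublink, then $b_2(\widehat{X}) = 2$, and for $n>0$, $c<0$ (or $n<0$, $c>0$) one finds $b_2^+(\widehat{X}) = b_2^-(\widehat{X}) = 1$, so Corollary~\ref{cor} leaves the $\pm 2$ ambiguity and you cannot conclude $\kappa = -c$. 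Collapsing $L_1$ as well removes the eigenvalue of sign $\sgn c$ and forces $\sigma(\widehat{X}) = \sgn n$, hence $b_2^{-}(\widehat{X}) = 0$ when $n>0$ and $b_2^{+}(\widehat{X}) = 0$ when $n<0$.

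Second, your claim that $\overline{\mu}(S,\mathfrak{s}_S) = \tfrac{1}{8}\sigma(X_\Gamma)$ because ``the characteristic sublink of $X_\Gamma$ is empty by construction'' is incorrect. The characteristic sublink $L_3$ lies inside the collapsed plumbing $L_3 \cup L_4$, so the induced spin structure on its lens-space boundary has $L_3$ as characteristic sublink, and one must use $\overline{\mu}(L(p,q), \widetilde{\mathfrak{s}}') = \tfrac{1}{8}(\sigma(\widetilde{X}') - x_3^2)$. The correction term $-x_3^2 = 8c$ is exactly what produces the value $-c$ in the final answer; without it the computation does not close.
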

 
 We note that $\kappa (Y, \mathfrak s )=0$ if $c=0$ since $K(0, 2d+1 )$ is the unknot and hence $Y=S^3$. 
 To prove Proposition \ref{twist}, we consider a 4-manifold $X$ represented by a framed link $\mathcal L$ in 
 Figure 3, which is bounded by $Y$. We denote the components of $\mathcal L$ by $L_1$, $L_2$, $L_3$ and $L_4$ from left to right. Let $x_i$ be the 
 basis of $H_2 (X, \bz )$ represented by a union of the core of the 2-handle attached along $L_i$ and the cone over $L_i$ in the 4-ball. Then 
 the characteristic sublink of $\mathcal L$ associated with $\mathfrak s$ is $L_3$, 
 and the Poincare dual of $x_3 \mod 2$ is the 
 obstruction to extending $\mathfrak s$ to the $\spin$ structure on $X$. Then the 4-manifolds $\widetilde X$ and 
 $\widetilde X '$ represented by $L_1$ and $L_3 \cup L_4$ have 
 boundaries $L(2c, -1)$ and $L(p  , q )$ respectively, 
where $p/q = [8c, n ]$ (since $L_3$ is also the unknot). Hence by pushing $\widetilde X$ and $\widetilde X '$ separately into 
 $\mathrm{Int} X$ and replacing them with the cones over $L(2c, -1 )$ and $L(p , q  )$, we obtain a 4-orbifold $\widehat X$ with $\spin$ structure 
 $\widehat{\mathfrak s }$. Let $\widetilde{\mathfrak s}$ and $\widetilde{\mathfrak s}' $ be the $\spin$ structures on $L(2c, -1)$ and 
 $L(p, q )$ induced from 
 $\widehat{\mathfrak s }$ respectively. Then 
 \begin{align*}
& \overline{\mu} (L(2c, -1 ) , \widetilde{\mathfrak s} ) =\frac 18 (\sigma (\widetilde X )) =\frac 18 \sgn c,  \\
& \overline{\mu} (L( p, q ) , \widetilde{\mathfrak s} ' ) =\frac 18 (\sigma (\widetilde X '  ) -x_3^2 ) =\frac 18 (-\sgn [8c, n ] -\sgn n  +8c ) , 
 \end{align*}
since $\widetilde{\mathfrak s}$ extends to the $\spin$ structure on $\widetilde X $, 
while the characteristic sublink of $L_3 \cup L_4$ associated with $\widetilde{\mathfrak s ' }$ is also $L_3$. 
The intersection matrix of $X$ is congruent to the diagonal matrix whose diagonal entries are 
 $\{ 1/4n, 4/[8c, n ] , -[8c, n ] , -n \}$ and hence $\sigma (X ) =0$. It follows that 
 $b_2 (\widehat X ) =1$ and $\sigma  (\widehat X ) =\sigma (X) -\sigma (\widetilde X ) -\sigma (\widetilde X ' ) =\sgn n$ since 
 $\sgn [8c, n ]=\sgn c$. Hence $b_2^+ (\widehat X ) = 1$ and $b_2^- (\widehat X ) =0$ if $n>0$, and 
 $b_2^+ (\widehat X  ) =0$ and $b_2^- (\widehat X ) =1$ if $n<0$. 
  Thus we deduce from Corollary \ref{cor} that 
 \[
 \kappa (Y, \mathfrak s ) =-\frac{\sigma (\widehat X)}{8} - (\overline{\mu} (L (2c, -1 ), \widetilde{\mathfrak s} ) +\overline{\mu} (L(p,q ) ,\widetilde{\mathfrak s}' )) 
 =-\frac 18 (1-\sgn n  +8c) =-c
  \]
  if $n>0$, and $\kappa (Y, \mathfrak s ) =-c$ or $-c+2$ if $n<0$. 

\medskip
 We note that $K ( 4c, -1 ) =T_{2, 4c+1}$ if $c>0$, and hence if $n>0$ and $n$ is odd, 
 \[
 \kappa (K(4c, -1)_{1/n} ) =\kappa (-\Sigma (2, 4c+1 , (8c+2)n -1  ) )=\overline{\mu} (\Sigma (2,4c+1 , (8c+2)n -1 )), 
 \]
 which is $-c$ by computation similar to the above case. 
 
 \medskip
 On the other hand, if we consider $K( 2c, 2d )$ (the double twist knot with $c$ times full twists and $d$ times full twists) instead, 
 $K(2c, 2d )_{1/n}$ is diffeomorphic to the Dehn surgery along the Borromean rings with framings $-1/c$, $-1/d$, and $1/n$ respectively, and 
 also diffeomorphic to $K(2c, -2n )_{-1/d}$ and $K(2d, -2n )_{-1/c}$ by the symmetry of the Borromean rings. Hence if $c$ or $d$ is even, 
 $\kappa (K( 2c, 2d )_{1/n} )  =0$ or $2$, and if $c<0$ or $d<0$ in addition, it is $0$ by Theorem \ref{Dehn} for every nonzero integer $n$. 
 
 \medskip
 Proposition \ref{twist} can be generalized to the case of the $\pm p/q$-surgery along $K : = K (4c, 2d+1 )$ $(c\ne 0)$ as follows. 
 
 \begin{proposition}\label{twist2} 
 Let $K = K ( 4c, 2d+1 )$ with $c\ne 0$ and $\mathfrak s_1$ be the $\spin$ structure on $K_{\pm p/q}$ defined as in the first paragraph of this section. 
 
\begin{enumerate}
 \item Suppose that $p\equiv 0 \pmod 2$, $q \equiv 1 \pmod 2$, and $p>q>0$, Then there exist $\beta_i \in \bz$ $(1\le i \le n )$ such that 
 $p/q = [\beta_1 ,\cdots , \beta_n ]$, $\beta_i \equiv 0 \pmod 2$, and $|\beta_i | \ge 2$ (note that $n$ is odd in this case). 
Let 
\begin{align*} 
&
\kappa_0 = -\frac 18 (1 + \sum_{i=2}^n  \sgn \beta_i -\sum_{i \ \text{odd}} \beta_i ) -c , \\ 
&\kappa_0 ' = \frac 18 (1+\sum_{i=2}^n \sgn \beta_i -\sum_{i \ \text{odd}} \beta_i ) -c .
\end{align*}

Then 
\begin{align*}
&\kappa (K_{p/q} , \mathfrak s_1 ) = 
\begin{cases} 
\kappa_0 &\quad (\text{if $c> \frac{p}{8q} $ or $c<0$}) ,\\
\kappa_0 \ \text{or} \ \kappa_0 +2 &\quad (\text{if $0< c < \frac{p}{8q} $}) ,
\end{cases}   \\ 
&\kappa (K_{-p/q} , \mathfrak s_1 )  =
\begin{cases} 
\kappa_0 ' &\quad (\text{if $-\frac{p}{8q} < c < 0 $}),  \\
\kappa_0 ' \ \text{or} \ \kappa_0 ' +2 
&\quad (\text{if $c< -\frac{p}{8q} <0$ or $c>0$}).
\end{cases}
\end{align*} 

\item 
Suppose that $q> p > 0$, $p\equiv 0 \pmod 2$ and $q \equiv 1 \pmod 2$. Then there exist $\gamma_i \in \bz$ $(2\le i \le n )$ such that 
$q/p =[\gamma_2 , \cdots , \gamma_n ]$, $|\gamma_i | \ge 2$, $\gamma_i \equiv 0 \pmod 2$ ($n$ is also odd). 
Let 
\begin{align*} 
&\kappa_1 = -\frac 18 (1-\sum_{i=2}^n \sgn \gamma_i +\sum_{\text{$i$ odd }} \gamma_i  ) -c , \\
&\kappa_1 ' = \frac 18 (1-\sum_{i=2}^n \sgn \gamma_i +\sum_{\text{$i$ odd}} \gamma_i ) -c . 
\end{align*}
Then 
\begin{align*}
&\kappa (K_{p/q} , \mathfrak s_1 ) = \kappa_1  \\
&\kappa (K_{-p/q} , \mathfrak s_1 ) =\kappa_1 ' \ \text{or} \ \kappa_1 ' +2 
\end{align*}

\item 
Suppose that $p>0$, $q>0$, $p \equiv q \equiv 1 \pmod 2 $.  Then there exist $\delta_i \in \bz $ $(1 \le i \le n )$ such that 
$(p+q)/p =[\delta_1 , \cdots , \delta_n ]$, $|\delta_i | \ge 2$, $\delta_i \equiv 0 \pmod 2$. 
Let 
\begin{align*}
&\kappa_2 = -\frac 18 ( 1 -\sum_{i=1}^n \sgn \delta_i  ) -c ,\\
&\kappa_2 ' = \frac 18 (1-\sum_{i=1}^n \sgn \delta_i ) -c .
\end{align*} 

Then 
\begin{align*} 
&\kappa (K_{p/q} , \mathfrak s_1 ) = 
\begin{cases} 
\kappa_2 &\quad (\text{if $c<0$ or $\frac{p}{8q} < c$}) ,\\
\kappa_2 \ \text{or} \ \kappa_2 +2 &\quad (\text{if $0< c < \frac{p}{8q} $} ) ,
\end{cases}  \\
&\kappa(K_{-p/q} , \mathfrak s_1 ) = 
\begin{cases} 
\kappa_2 ' &\quad (\text{if -$\frac{p}{8q} <c <0$ } ), \\
\kappa_2 ' \ \text{or} \ \kappa_2 ' +2 
&\quad (\text{if $c>0$ or $c < -\frac{p}{8q}$} ) .
\end{cases} 
\end{align*}

\end{enumerate}
\end{proposition}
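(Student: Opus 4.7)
The plan is to generalize the proof of Proposition \ref{twist} by replacing the single meridian of framing $n$ (which realized the $1/n$-surgery there) by a chain of meridians corresponding to an even continued fraction expansion of type $(\dagger)$. According to which of the three parity cases we are in, one uses the expansion of $p/q$, of $q/p$, or of $(p+q)/p$, as already employed in the proof of Theorem \ref{Dehn} and alluded to in Remark \ref{rem2}(1,4). Combined with the four-component surgery diagram of Figure 3 for $K=K(4c,2d+1)$, this produces a compact spin 4-manifold $X$ with framed link $\mathcal L$ bounded by $K_{\pm p/q}$ and carrying a spin structure $\mathfrak s_X$ restricting to $\mathfrak s_1$ on the boundary.

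Next I would identify the characteristic sublink $\mathcal L_0\subset\mathcal L$ associated with $\mathfrak s_1$ via Remark \ref{rem2}(1), and then find a sublink $\mathcal L_1\supset\mathcal L_0$ with two disjoint pieces: (a) the $2c$-framed unknot $L_1$ bounding a plumbing with boundary $L(2c,-1)$, as in the proof of Proposition \ref{twist}; and (b) a chain containing the knot $L_3=K$ together with the continued-fraction meridians, bounding a lens space $L(p',q')$ whose associated continued fraction has its leading entry shifted by $8c$ to absorb the twist region. Pushing both plumbings into $\Int X$ and collapsing each to the cone over its lens space boundary produces a spin 4-orbifold $\widehat X$, still carrying $\mathfrak s_1$ on $K_{\pm p/q}$. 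A direct diagonalization of the intersection form of $X$ shows that $b_2(\widehat X)=1$ in every case, so Corollary \ref{cor} is applicable.

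The main obstacle is the sign bookkeeping that produces the thresholds $c=\pm p/(8q)$ in parts (1) and (3). After diagonalization, one entry of the intersection form is proportional to a continued fraction with leading term $8c$, and its sign flips exactly when $c$ crosses $\pm p/(8q)$; this determines whether $(k^+(\widehat X),k^-(\widehat X))=(1,0)$, in which case Corollary \ref{cor} pins down $\kappa$ uniquely as $\kappa_j$ (or $\kappa_j'$), or $(0,1)$, in which case Corollary \ref{cor} leaves the ambiguity $\kappa_j$ or $\kappa_j+2$. To read off the explicit formulas $\kappa_0,\kappa_0',\kappa_1,\kappa_1',\kappa_2,\kappa_2'$, one then assembles (i) the $\sgn$ contributions of the continued-fraction entries, which enter through $\overline{\mu}$ of the attached lens spaces via $\overline{\mu}(L(p',q'),\mathfrak s')=\tfrac{1}{8}(\sigma-x_3^2)$ with the $-x_3^2=8c$ correction for the characteristic sublink, exactly as in the proof of Proposition \ref{twist}, and (ii) the $8c$ contribution to $\sigma(\widehat X)$ itself. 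The negative-surgery cases $K_{-p/q}$ are obtained by reversing orientation, which swaps $k^+$ and $k^-$ and explains the flipped conditions on the sign of $c$ in the statement.
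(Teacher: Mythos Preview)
Your overall plan coincides with the paper's: extend the Figure~3 link $\mathcal L$ by a chain of meridians encoding an even continued fraction (of $p/q$, $q/p$, or $(p+q)/p$ according to the parity), collapse the $2c$-framed unknot together with the chain carrying the $-8c$-shifted leading entry to cones over lens spaces, and read off $\kappa$ from Corollary~\ref{cor} applied to the resulting $b_2=1$ spin orbifold; the sign of the surviving diagonal entry involves $p-8cq$, which gives exactly the threshold at $c=p/(8q)$.  A small correction: in the paper's diagram every component of $\mathcal L$ is an unknot (Figure~3 is a surgery presentation of $K_{1/n}$), and the chain that gets coned off begins at $L_1'=L_4$, so your identification ``$L_3=K$'' is a misreading, though inessential to the strategy.

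The genuine gap is in your last sentence.  Reversing orientation of the orbifold $\widehat X$ bounded by $K_{p/q}$ yields an orbifold bounded by $-K_{p/q}=\overline K_{-p/q}$, the $(-p/q)$-surgery on the \emph{mirror} of $K$ (cf.\ Remark~\ref{rem2}(3)); since $K(4c,2d+1)$ is not amphichiral in general (for instance $K(4c,-1)=T_{2,4c+1}$), this is not $K_{-p/q}$ and the argument does not transfer.  The paper treats $K_{-p/q}$ directly: it builds a separate framed link $\mathcal L_{(j)}'$ by negating the continued-fraction framings ($\beta_i\mapsto -\beta_i$, with the first entry becoming $-8c-\beta_1$, and analogously in cases~(2) and~(3)), and then repeats the cone-collapse construction from scratch.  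This produces singular lens spaces of type $L(p+8cq,q)$ rather than $L(p-8cq,-q)$, and a signature $\sigma(\widehat X')=\sgn(p+8cq)-1-\sgn c$, whence the threshold correctly shifts to $c=-p/(8q)$.  None of this follows from orientation reversal of the $K_{p/q}$ orbifold, so that part of your outline needs to be replaced.
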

 
 \begin{proof} 
 Suppose that $p>q>0$, $p\equiv 0 \pmod 2$, and $q\equiv 1 \pmod 2$. Let $\mathcal L_{(1)}$ be a framed link obtained from 
 $\mathcal L$ in the proof of Proposition \ref{twist} by replacing $L_4$ with $\cup_{i=1}^n  L_i '$, where 
 \begin{itemize}
 \item 
 $L_1 ' =L_4$ and $L_i'$ is a meridian of $L_{i-1} '$ $(i\ge 2 )$, 
 \item 
 the framings of $L_i'$ are $-8c+\beta_1$ $(i=1)$, and $\beta_i$ $(i\ge 2 )$ respectively. 
 \end{itemize}
 Then $K_{p/q}$ bounds a 4 manifold $X_{(1)}$ represented by $\mathcal L_{(1)}$ and the characteristic sublink of $\mathcal L_{(1)}$ associated with 
 $\mathfrak s_1$ is $\cup_{\text{$i$ odd}} L_i '$. Furthermore $\cup_{i=1}^n  L_i '$ represents a plumbing $X_{\Gamma_{(1)}}$ with 
 $\partial X_{\Gamma_{(1)}} = L(p-8cq, -q )$. 
 The intersection matrix of $X_{(1)}$ is congruent to the diagonal matrix whose diagonal entries are 
 \[
 \{ p/(4q ), -4q/(p-8cq) , (p-8cq)/q, [\beta_i , \cdots , \beta_n ] \ (i\ge 2 ) \}. 
\]
Furthermore the intersection matrices of the 4-manifold $X_{L_1}$ corresponding to $L_1$ and $X_{\Gamma_{(1)}}$ are 
 congruent to the diagonal matrices whose diagonal entries are $\{ 2c \}$ and 
 $\{ (p-8cq)/q , [\beta_i , \cdots , \beta_n ] \ (i\ge 2 ) \} $ respectively. 
 Then by pushing $X_{\Gamma_{(1)}}$ and $X_{L_1}$ into $X_{(1)}$ and 
 replacing them with the cones over $L(p-8cq , -q )$ and $L(2c , -1 )$ we obtain a $\spin$ orbifold $\widehat  X_{(1)}$ bounded by $K_{p/q}$ whose 
 $\spin$ structure $\widehat{\mathfrak s}$ induces $\mathfrak s_1$ on $K_{p/q}$. Let 
 $\mathfrak s_{(1)}$ and $\mathfrak s_{L_1}$ be the $\spin$ structures on $L(p-8cq , q )$ and $L(2c , -1 )$ induced by $\widehat{\mathfrak s}$ respectively. 
 It follows that $\sigma (\widehat X_{(1)} ) =\sigma (X_{(1)}) -\sigma (X_{\Gamma_{(1)}} ) -\sigma (X_{L_1} ) = 1 -\sgn (p-8cq )   -\sgn c$ and hence 
 \begin{align*}
& b^+_2 (\widehat X_{(1)} ) = 1, \ b^-_2 (\widehat X_{(1)}) =0 \ \text{if $c<0$ or $p/q  < 8c$},  \\
& b^+_2 (\widehat X_{(1)} ) =0, \ b^-_2 (\widehat X_{(1)}) =1 \ \text{if $0<8c < p/ q$}. 
\end{align*}
 Furthermore we have 
 \[
  \overline{\mu} (L(p-8cq , q ) , \mathfrak s_{(1)} ) = \frac 18 (\sgn (p-8cq) +\sum_{i=2}^n  \sgn \beta_i -\sum_{\text{$i$ odd}} \beta_i  ) +c, 
  \]
  and $\overline{\mu } (L(2c , -1 ) = \frac 18 \sgn c$  since $\sgn [\beta_i , \cdots , \beta_n ] = \sgn \beta_i$. 
 Let $\mathcal L_{(1)} '$ be a framed link obtained from $\mathcal L_{(1)}$ by replacing the framings of $L_i '$ with 
 $-8c -\beta_1$ $(i=1)$ and $-\beta_i$ $(i \ge  2 )$ respectively. Then $\mathcal L_{(1)} '$ represents a 4-manifold $X_{(1)} '$ bounded by $K_{-p/q}$ and the characteristic sublink of $\mathcal L_{(1)} ' $ is 
 also $\cup_{\text{$i$ odd}} L_i '$. 
In this case $\cup_{i=1}^n  L_i '$ represents a plumbing $X_{\Gamma_{(1)}'}  $ with $\partial X_{\Gamma_{(1)} '} = 
L( p+8cq , q ) $. 
We obtain a $\spin$ orbifold $\widehat X_{(1)} '$ with $\partial \widehat X_{(1)} '  =K_{-p/q}$ by replacing $X_{\Gamma_{(1)} '} $ and $X_{L_1}$ with 
the cones over their boundaries, whose $\spin$ structure $\widehat{\mathfrak s '}$ induces $s_1$ on $K_{-p/q}$. 
By computation similar to the above case, we have $\sigma (\widehat X_{(1)} ' ) = 
\sgn (p+8cq)-1 -\sgn c$ and hence 
\begin{align*}
&b_2^+ (\widehat X_{(1)} ' ) =1, \ b_2^- (\widehat X_{(1)} ' ) =0 \ \text{if $-p/q < 8c <0$} , \\
&b_2^- (\widehat X_{(1)} ' ) =0, \ b_2^- (\widehat X_{(1)} ' ) =1 \ \text{if $c>0$ or $8c < -p/q $}. 
\end{align*}
Furthermore we have 
\[
\overline{\mu} (L( p+8cq , q ) , \mathfrak s_{(1) '} ) = \frac 18 (-\sgn (p+8cq) -\sum_{i=2}^n \sgn \beta_i +\sum_{\text{$i$ odd}} \beta_i  )  +c , 
\]
where $\mathfrak s_{(1) '}$ is the $\spin$ structure induced from $\widehat {\mathfrak s '}$. 
 
 \medskip
 Next suppose that $q>p>0$, $p\equiv 0 \pmod 2$ and $q\equiv 1 \pmod 2$. 
 Let $\mathcal L_{(2)}$ be a framed link obtained from $\mathcal L$ by replacing $L_4$ with 
 $\cup_{i=1}^n  L_i '$, where 
 \begin{itemize}
 \item 
 $L_1 ' = L_4$, $L_i '$ is a meridian of $L_{i-1} '$ $(i\ge 2 )$, 
 \item 
 the framings of $L_i '$ are $-8c$ $(i=1)$, $-\gamma_i$ $(i\ge 2 )$ respectively. 
 \end{itemize}
 Then $\mathcal L_{(2)}$ represents a 4-manifold $X_{(2)}$ bounded by $K_{p/q}$ and the characteristic sublink of $\mathcal L_{(2)}$
associated with $\mathfrak s_1$ is 
 $\cup_{\text{$i$ odd}} L_i '$. Furthermore $\cup_{i=1}^n L_i '$ represents a plumbing $X_{\Gamma_{(2)}}$ with 
 $\partial X_{\Gamma_{(2)}} = L( p-8cq , -q )$. We can also form a $\spin$ orbifold $\widehat X_{(2)}$ from $X_{(2)}$ by replacing 
 $X_{\Gamma_{(2)} }$ and $X_{L_1}$ with the cones over their boundaries, whose $\spin$ structure $\widehat{\mathfrak s}_{(2)}$ induces 
 $\mathfrak s_1$ on $K_{p/q}$. The intersection matrix of $X_{(2)}$ is congruent to the diagonal matrix whose diagonal entries are 
 \[
 \{ p/(4q) , -4q/ (p-8cq ), (p-8cq)/ q , -[\gamma_i , \cdots , \gamma_n ]  \ (i\ge 2 ) \} 
 \]
 and we have $\sigma (\widehat X_{(2)} ) = 1-\sgn (p-8cq ) -\sgn c$ by computation similar to the above case. Hence 
we have $b_2^+ (\widehat X_{(2)} ) = 1 , \ b_2^- (\widehat X_{(2)}) = 0$ since $0<p/q <1$ 
and 
\[
\overline{\mu} (L((p-8cq , -q ) , \mathfrak s_{(2)}  ) =\frac 18 (\sgn (p-8cq)   - \sum_{i=2}^n  \sgn \gamma_i +\sum_{\text{$i$ odd}} \gamma_i ) 
+c 
\]
and $\overline{\mu} (L(2c , -1 ) , \mathfrak s_{L_1} ) = \sgn c$, where $\mathfrak s_{(2)}$ and $s_{L_1}$ are the $\spin$ structures induced by 
$\widehat{\mathfrak s}_{(2)}$. 
 On the other hand, $K_{-p/q}$ bounds a 4-manifold $X_{(2)} '$ represented by a framed link $\mathcal L_{(2)} '$, which is obtained from 
 $\mathcal L_{(2)}$ by replacing the framings of $L_i '$ with $\gamma_i$ $(i\ge 2 )$. The characteristic sublink of $\mathcal L_{(2)} '$ 
 associated with $\mathfrak s_1$ is also $\cup_{\text{$i$ odd}} L_i '$ and $\cup_{i=1}^n L_i '$ forms a plumbing $X_{\Gamma_{(2)} '}$ 
 with $\partial X_{\Gamma_{(2)} '} =L(p+8cq , q )$. We also form a $\spin$ 4-orbifold $\widehat X_{(2)} '$ from $X_{(2)} '$ by 
 replacing $X_{\Gamma_{(2)} '}$ and $X_{L_1}$ with the cones over their boundaries, whose $\spin$ structure $\widehat{\mathfrak s}_{(2) '}$ induces 
 $\mathfrak s_1$ on $K_{-p/q}$. Then computation similar to the above case shows that 
 $\sigma (\widehat X_{(2)} ' ) =\sgn (p+8cq)  -1 -\sgn c$, and $b_2^+ (\widehat X_{(2)} ' ) =0, \ b_2^- (\widehat X_{(2)} )=1$.
 Furthermore we have 
\[
\overline{\mu} (L(p+8cq , q  ) , \mathfrak s_{(2)} '  ) = \frac 18 (- \sgn (p+8cq) +\sum_{i=2}^n \sgn \gamma_i  -\sum_{\text{$i$ odd}} \gamma_i )  +c 
 \]
 where $\mathfrak s_{(2)} '$ is induced from $\widehat{\mathfrak s_{(2)}} '$.

 \medskip
 Finally suppose that  $p>0$, $q>0$, and $p\equiv q \equiv 1 \pmod 2$. In this case we consider a framed link 
 $\mathcal L_{(2)}$ obtained from $\mathcal L$ by replacing $L_4$ with 
 $\cup_{i=1}^{n+2} L_i '$, where 
 \begin{itemize}
 \item 
 $L_1 ' =L_4$, $L_i ' $ is a meridian of $L_{i-1} '$ $(i\ge 2 )$, 
 \item 
 the framings of $L_i '$ are $-8c-1$ $(i=1)$, $-1$ $(i=2 )$, $-\delta_{i-2}$ $(i\ge 3 )$ respectively. 
 \end{itemize}
 Then $\mathcal L_{(2)}$ represents a 4-manifold $X_{(2 )}$ bounded by $K_{p/q}$, and $\cup_{i=1}^ {n+2} L_i '$ represents a plumbing 
 $X_{\Gamma_{(2)}}$ bounded by $L(p-8cq , -q )$. The characteristic sublink of $\mathcal L_{(2)}$ associated with $\mathfrak s_1$ is 
 $L_1 '$. The intersection matrices of $X_{(2)}$ and $X_{\Gamma_{(2)}}$ are congruent to the diagonal matrices whose diagonal entries are 
 \begin{align*}
 &\{ p/(4q), -4q/ (p-8cq) , (p-8cq)/q , -q/ (p+q), -[\delta_i , \cdots , \delta_n ] \  (i \ge 1 ) \} \\
 &\{  (p-8cq)/q , -q/(p+q) , -[\delta_i , \cdots , \delta_n ] \ (i\ge 1 ) \}.  
 \end{align*}
It follows that  if we form a $\spin$ 4-orbifold $\widehat X_{(2)}$ from $X_{(2)}$ by replacing $X_{\Gamma_{(2)}}$ and $X_{L_1}$ with the cones over their 
 boundaries, whose $\spin$ structure $\widehat{\mathfrak s}_{(2)}$ induces $\mathfrak s_1$ on $K_{p/q}$, we have 
 $\sigma (\widehat X_{(2)} ) = 1 -\sgn (p-8cq)  -\sgn c$ and hence 
 \begin{align*}
  & b_2^+ (\widehat X_{(2)} ) =1, \ b_2^- (\widehat X_{(2)} ) =0 \ \text{if $c<0$ or $8c>p/q$} , \\
  &b_2^+ (\widehat X_{(2)}) =0 , \ b_2^- (\widehat X_{(2)} ) =1 \ \text{if $0<8c <p/q$}. 
  \end{align*} 
  Furthermore we have 
  \[
  \overline{\mu} (L ( p-8cq , -q ) , \mathfrak s_{(2)} ) = \frac 18 (\sgn (p-8cq ) -\sum_{i=1} \sgn \delta_i  ) +c ,
  \]
  where $\mathfrak s_{(2)}$ is induced by $\widehat{\mathfrak s}_{(2)}$. 
  On the other hand $K_{-p/q}$ bounds a 4-manifold $X_{(2)} '$ represented by a framed link $\mathcal L_{(2)} '$, which is obtained from 
  $\mathcal L_{(2)}$ by replacing the framings of $L_i '$ with 
  $ -8c+1$ $(i=1)$, $+1$ $(i=2 )$, $\delta_{i-2}$ $(i\ge 3 )$ respectively. In this case 
  $\cup_{i=1}^{n+2} L_i ' $ forms a plumbing $X_{\Gamma_{(2)} '}$ bounded by $L(p+8cq , q )$, and the characteristic sublink of 
  $\mathcal L_{(2)} '$ is also $L_1 '$. If we form a $\spin$ orbifold $\widehat X_{(2)} '$ from $X_{(2) } '$ by replacing 
  $X_{\Gamma_{(2)} ' }$ and $X_{L_1}$ with the cones over their boundaries, whose $\spin$ structure $\widehat{\mathfrak s}_{(2)} '$ induces 
  $\mathfrak s_1$ on $K_{-p/q}$ as before, computation similar to the above case shows that 
  $\sigma (\widehat X_{(2)} ' ) =\sgn (p+8cq ) -1 -\sgn c$, 
  \begin{align*}
  &b_2^+ (\widehat X_{(2)} ' ) =1, \ b_2^- (\widehat X_{(2)} ' ) =0 \ \text{if $-p/q <8c<0$}, \\
  &b_2^+ (\widehat X_{(2)} ' ) =0, \ b_2^- (\widehat X_{(2)} ' ) =1 \ \text{if $c>0$ or $8c<-p/q$ },
  \end{align*} 
  and also 
  \[
  \overline{\mu} (L( p+8cq, q ) , \mathfrak s_{(2)} ' ) =\frac 18 (-\sgn (p+8cq) +\sum_{i=1}^n \sgn \delta_i  ) +c ,
  \]
  where $\mathfrak s_{(2)} '$ is induced from $\widehat{\mathfrak s}_{(2) '}$. 
  \medskip
  
  We obtain the required results by applying Corollary \ref{cor} to the above computation. 
  
\end{proof}


\begin{thebibliography}{99}


\bibitem{Dai} I. Dai, On the $\Pin (2)$-equivariant monopole Floer homology of plumbed 3-manifolds, Mich. Math. J. {\textbf 67} (2018), 423--447. 

\bibitem{DM} I. Dai, C. Manolescu, Involutive Heegaard Floer homology and plumbed three-manifolds, J. Inst. Math. Jussieu {\textbf 18} (2019), 1115--1155. 

\bibitem{DSS} I. Dai, H. Sasahira, M. Stoffregen, Lattice homology and Seiberg-Witten-Floer spectra, arXiv:2309.01253, (2023) 

\bibitem{FF} Y. Fukumoto, M. Furuta, 
Homology 3-spheres bounding acyclic 4-manifolds, 
Math. Res. Lett. {\textbf 7} (2000), 757--766 

\bibitem{FFU}  Y. Fukumoto, M. Furuta, M. Ue, 
$W$ invariants and the Neumann-Siebenmann 
invariants for Seifert homology 3-spheres,  Topol. and its appl.
{\textbf {116}}  (2001), 
333--369

\bibitem{F} K. A. Fr{\o}yshov, Monopole Floer homology for rational homology 3-manifolds, Duke Math. J. {\text{155}} (2010), 519--576
\bibitem{HM} K. Hendricks, C. Manolescu, Involutive Heegaard Floer Homology, Duke  Math. J. {\textbf 166 (7)} (2017), 1211--1299. 

\bibitem{KM} P. Kronheimer, T. Mrowka, Monopoles and Three-Manifolds, New Mathematical Monograph {\textbf{10}} Cambridge Univ. Press, (2007) 

\bibitem{MOY} T. Mrowka, P. Ozsv\'ath, and B. Yu, Seiberg-Witten monopoles on Seifert 
fibered spaces, Comm. Anal. Geom., {\textbf{5}} (1997), No. 4, 685--791

\bibitem{Mano1} C. Manolescu, Seiberg-Witten Floer stable homotopy type of three manifolds with $b_1 =0$, Geom. Topol. {\textbf 7} (2003), 889--932.


\bibitem{Mano3} C. Manolescu, $\Pin (2)$-equivariant Seiberg-Witten Floer homology and the 
triangulation conjecture, J. Amer. Math. Soc. {\textbf 29} (2016), 147--176. 

\bibitem{Mano4} C. Manolescu, On the intersection forms of spin 4-manifolds with boundary, Math. Ann. {\textbf 359} (2014), 695--728.
\bibitem{N}  W. D. Neumann, An invariant of plumbed homology spheres, in Topology Symposium, 
Siegen 1979 (Proc. Sympos., Univ. Siegen), Springer, Berlin Vol. {\textbf{788}} Lectures 
Notes in Math., pp.125--144, (1980) 


\bibitem{Ne1} A. N\'emethi, On the Ozsv\'ath-Szab\'o invariant of negative definite plumbed 3-manifolds, Geom. Topol. {\textbf{9}} (2005), 991--1042

\bibitem{OS} P. Ozsv\'ath, Z. Szab\'o, Absolutely graded 
Floer homologies and intersection forms for four manifolds, 
Adv. in Math., {\textbf {173}} (2003), 179--261

\bibitem{S}  N. Saveliev,  Fukumoto-Furuta invariants of plumbed homology 
3-spheres, Pacific J. Math. {\textbf {205}} (2002), 465--490

\bibitem{Si} L. Siebenmann, On vanishing of the Rohlin invariant and nonfinitely 
amphicheiral homology 3-spheres, in 
Topology Symposium, Siegen 1979 (proc. Sympos., Univ. Siegen, 1979) , 
Springer, Berlin, Vol. {\textbf{788}} lecture Notes in Math., pp. 172--222, (1980)

\bibitem{Stoff} M. Stoffregen, $\Pin (2)$-equivariant Seiberg-Witten Floer homology of Seifert fibrations, Compos. Math. {\textbf 156 (2)} (2020), 199-250. 


\bibitem{U2}  M. Ue,  The Neumann-Siebenmann invariant and Seifert surgery, 
Math.Z. {\textbf {250}} (2005), 475--493


\bibitem{U4} M. Ue, Constraints on intersection forms of spin 4-manifolds bounded by Seifert rational homology 3-spheres in terms of $\overline{\mu}$ and $\kappa$ invariants, 
arXiv:2206.05412, (2022) 

\end{thebibliography}
\end{document}